\newtheorem{theorem}{Theorem}[section]
\newtheorem{lemma}[theorem]{Lemma}
\newtheorem{proposition}[theorem]{Proposition}
\newtheorem{corollary}[theorem]{Corollary}
\newtheorem*{introthmA}{Theorem A}
\newtheorem*{introthmB}{Theorem B}
\newtheorem*{introthmC}{Corollary B.0}
\newtheorem*{appthmARunge}{Theorem [Runge]}
\theoremstyle{definition}
\theoremstyle{remark}
\newtheorem{e.g.}[theorem]{Example}
\newcommand{\ora}{\overrightarrow}
\begin{document}

\title{Period polynomials for Picard modular forms}
\author{Sheldon Joyner}
\address{Department of Mathematics and Computer Science
\newline
Suffolk University
\newline
8 Ashburton Pl
\newline
Boston
\newline
MA 02108
\newline
{\tt{stjoyner@suffolk.edu}}
}

\subjclass[2010]{Primary: 11F23, 11F55, 11F67; Secondary: 11F03}

\begin{abstract}
The relations satisfied by period polynomials associated to modular forms yield a way to count dimensions of spaces of cusp forms. After showing how these relations arise from those on the mapping class group $PSL(2, \mathbb{Z})$ of the moduli space $\mathcal{M}_{0,4}$ of genus 0 curves with 4 marked points, the author goes on to define period polynomials associated to Picard modular forms. Relations on these Picard period polynomials are then determined, and via an embedding of a monodromy representation of the moduli space $\mathcal{M}_{0,5}$ of genus 0 curves with 5 marked points in $PU(2,1 ; \mathbb{Z}[\rho])$ (where $\rho$ denotes a third root of unity), they are  related to the geometry of $\mathcal{M}_{0,5}.$
\end{abstract}

\maketitle

\tableofcontents

\section*{Introduction}

In this paper, period polynomials associated to Picard modular forms are defined, by analogy with the usual period polynomials associated to modular forms. Relations satisfied by these Picard period polynomials are given and linked to the geometry of the moduli space $\mathcal{M}_{0,5}$ of genus 0 curves with 5 marked points. 


Period polynomials allow for the determination of the dimension of a given space of cusp forms, by providing an isomorphism thereof into an explicit Euclidean space.  This is the well-known Eichler-Shimura-Manin isomorphism as discussed for example in \cite{LangIntroModForms}, and finding the relevant Euclidean space is tantamount to determining relations on the period polynomials. As Brown observed in \cite{BrownPerPolys}, one of these relations is reminiscent of the hexagonal relation on the Grothendieck-Teichm\"{u}ller group $GT$. In fact, this three term symmetry relation satisfied by the period polynomials for modular forms \cite{GanglKZ} is the same as the logarithmic (Lie algebra) version of the hexagonal relation satisfied by associators \cite{Bar-NatanDancso}. Of course this is not a coincidence. It may be explained using a simple geometric argument based upon an explicit realization of 
$PSL(2, \mathbb{Z})$ 
as a group of paths in $\mathbb{P}^1\backslash\{0,1,\infty\}$ developed by the author in \cite{Joyner:qm} (see also \S 2.5 of  Schneps' article in \cite{Buffetal:99}), as shown in \S\ref{s:1} below. The deeper story is that the group-like elements of the image of the Betti-de Rham comparison isomorphism for unipotent completions of the topological and algebraic fundamental groups of $\mathbb{P}^1\backslash\{0,1,\infty\}$, (cf. \cite{Deligne:89}), when suitably interpreted may be identified with elements of $GT$ (see also \S 4 of Kedlaya's unpublished notes \cite{KedlayaDeligne} in conjunction with \cite{Joyner:qm}). The association of the Eichler-Shimura-Manin theory of $PSL(2, \mathbb{Z})$ with the Grothendieck-Teichm\"uller group that arises in this way is thus an echo of the work of  Brown and Hain in realizing a version of the Eichler-Shimura-Manin isomorphism as a special case of the Betti-de Rham comparison isomorphism over the moduli space $\mathcal{M}_{1,1}$ of elliptic curves - cf. \cite{BrownHainDeRham}, in which a cohomological construction using a period polynomial associated to a weakly modular form yields the comparison isomorphism in a particular case.

The reason that the hexagonal relation in $GT$ is linked to the geometry of $\mathbb{P}^1\backslash\{0,1,\infty\}$ has to do with the fact that the latter may be identified with the moduli space $\mathcal{M}_{0,4}$ of genus 0 Riemann surfaces with 4 marked points, since $GT$ is in fact the automorphism group of the tower of (profinite completions of) the fundamental groupoids $\mathcal{T}_{0,n}$ of the respective moduli stacks $\mathcal{M}_{0,n}$ of genus 0 surfaces with $n$ marked points (cf. \cite{Drinfel'd}). It is known that all relations in this tower come from the $n=4$ and $n=5$ cases. This begs the question of whether a version of Eichler-Shimura-Manin over a suitable explicit group $G$ of paths in $\mathcal{M}_{0,5}$ could be developed. The purpose of this paper 
is to present a first step in this direction, in providing the definition of an analogue of the period polynomials in this situation, investigating their symmetry relations, and relating the latter to the geometry of $\mathcal{M}_{0,5}$. 

On the De Rham side of the story, one reason that the moduli space $\mathcal{M}_{0,4}$ 
appears 
 is that it serves as a parameter space for elliptic curves via the Legendre form of the Weierstrass equation. The moduli space $\mathcal{M}_{0,5}$ on the other hand serves as a parameter space for Picard curves (cf. \cite{Shiga}) and for certain hyperelliptic curves of genus 3 (cf. \cite{KMatsumoto}) in a similar way. These two cases correspond to the situations in which the centralizer $\Gamma_M$ of a corresponding element $M$ of finite order in the modular group $Sp(3, \mathbf{Z})$ is unitary (as is discussed in \cite{RungePicardMod}). In the Picard curve case it turns out that $\Gamma_M \simeq PU(2, 1; \mathbb{Z}[\rho])$ consists of the matrices with entries in $\mathbb{Z}[\rho]$ in the projectivization of a unitary group of $3\times 3$ matrices preserving a certain Hermitian form and $\rho$ denotes the third root of unity $\exp(2 \pi i/3).$ Now in \cite{FalbelParkerPicard}, Falbel and Parker  
provided explicit presentations for $PU(2,1;\mathbb{Z}[\rho])$ along with a fundamental domain in the universal cover. Together with the geometric analysis of Shiga in \cite{Shiga} and Runge's definition of Picard modular forms for $PU(2,1;\mathbb{Z}[\rho])$ in \cite{RungePicardMod}, this work serves as a crucial ingredient in the definition given here of the Picard period polynomials. 




The organization of the paper is as follows: \S \ref{s:1} begins by assembling known facts relating to the monodromy representation for $\mathbb{P}^1\backslash\{0,1,\infty\}$ coming from the period mapping for elliptic curves, and the associated explicit realization of the mapping class group as a space of paths on $\mathbb{P}^1\backslash\{0,1,\infty\}$ given in \cite{Joyner:qm} is explained. Next, the usual period polynomials $P_f(X_0,X_1)$ are defined and the (known) symmetry relations they satisfy are proven entirely from consideration of the geometry of 
$\mathbb{P}^1\backslash\{0,1,\infty\}$. Indeed, if $\overrightarrow{01}$ denotes a tangential basepoint (see \S \ref{s:1.1} for the definition) at 0 in the direction of 1, $s$ is the tangential path from $\ora{01}$ to $\ora{10}$ and $t$ denotes a loop in the upper half plane in $\mathbb{P}^1\backslash\{0,1,\infty\}$ from $\ora{01}$ to $\ora{0\infty}$, and concatenation of paths occurs via a twisting by an $S_3$-action (again see \S \ref{s:1.1} below for the details); then we prove:
\begin{introthmA}
 Every contractible path in $\pi_1(\mathbb{P}^1\backslash\{0,1,\infty\}, \overrightarrow{01})$ gives a relation on $P_f(X_0,X_1)$. In particular, 
\begin{equation}
\label{e:DA1intro}
P_f(X_0,X_1) = -P_f(-X_1, X_0)
\end{equation}
corresponds to the path $s \circ s = 1$ and 
\begin{equation}
\label{e:DA2intro}
P_f(X_0,X_1) + P_f(-X_0-X_1, X_0) + P_f(X_1, -X_0-X_1) = 0
\end{equation}
corresponds to the contractibility of the hexagonal path $(s\circ t)^3$.
\end{introthmA}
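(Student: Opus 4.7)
The plan is to leverage the identification established in \S\ref{s:1.1} of $PSL(2,\mathbb{Z})$ with the $S_3$-twisted fundamental groupoid of $\mathbb{P}^1\backslash\{0,1,\infty\}$ whose vertices are the three tangential basepoints at the punctures, under which $s$ represents $S = \left(\begin{smallmatrix} 0 & -1 \\ 1 & 0 \end{smallmatrix}\right)$ and $t$ represents $T = \left(\begin{smallmatrix} 1 & 1 \\ 0 & 1 \end{smallmatrix}\right)$. A ``contractible path'' is then by definition one whose image in $PSL(2,\mathbb{Z})$ is the identity. Since all relations in $PSL(2,\mathbb{Z})$ are consequences of the defining relations $S^2 = I$ and $(ST)^3 = I$, the general statement of the theorem follows once the two specific cases (\ref{e:DA1intro}) and (\ref{e:DA2intro}) are established.

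The main ingredient is the classical integral representation
\[
P_f(X_0,X_1) \;=\; \int_{0}^{i\infty} f(z)\,(X_0 - zX_1)^{k-2}\,dz,
\]
the contour being the vertical ray in the upper half plane, which descends via the Legendre period mapping to the appropriate tangential path on $\mathcal{M}_{0,4}$. The key transformation rule is standard: for $\gamma = \left(\begin{smallmatrix} a & b \\ c & d \end{smallmatrix}\right) \in PSL(2,\mathbb{Z})$, the change of variables $z = \gamma w$ combined with the weight-$k$ modular invariance $f(\gamma w) = (cw+d)^k f(w)$ converts the integrand along $\gamma \cdot (\text{base contour})$ into $f(w)(X'_0 - wX'_1)^{k-2}dw$ along the base contour, where $(X'_0, X'_1)$ is obtained from $(X_0, X_1)$ via the Möbius action of $\gamma^{-1}$. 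Hence integration of the defining one-form along a concatenation $\gamma_1 \circ \cdots \circ \gamma_r$ and pulling each piece back to the base contour produces $\sum_i P_f(g_i \cdot (X_0,X_1))$, where $g_i$ is the cumulative transformation $\gamma_1 \cdots \gamma_{i-1}$.

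If the concatenated path is contractible in the twisted sense, its lift to the upper half plane closes up, the integral vanishes, and we obtain a relation on $P_f$. Applied to $s \circ s = I$, the sum has two terms $P_f(X_0,X_1) + P_f(S \cdot (X_0,X_1)) = 0$; the action of $S$ sends $(X_0, X_1)$ to $(X_1, -X_0)$, which for even $k-2$ equals $(-X_1, X_0)$ up to the sign coming from the $(cw+d)^{-k}$ cocycle, giving (\ref{e:DA1intro}). Applied to the hexagonal $(s\circ t)^3 = I$, the three cumulative matrices are $I$, $ST$, and $(ST)^2$; their Möbius actions on $(X_0,X_1)$ produce respectively $(X_0, X_1)$, $(-X_0 - X_1, X_0)$, and $(X_1, -X_0 - X_1)$, giving (\ref{e:DA2intro}).

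The hardest part will be correctly handling the $S_3$-twisting of concatenation: since $s$ and $t$ have distinct endpoints among the tangential basepoints, their composition is not a groupoid concatenation in the naive sense but involves an intertwining by a permutation whose effect on the integration variable -- combined with the cocycle factor $(cw+d)^{-k}$ arising from the weight-$k$ modular transformation -- must be carefully tracked to recover the precise signs appearing in (\ref{e:DA1intro}) and (\ref{e:DA2intro}). This bookkeeping is essentially the technical content of \S\ref{s:1.1}; once it is in place, the general statement of the theorem reduces to additivity of the integral along a null-homotopic lifted contour.
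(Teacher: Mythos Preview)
Your sketch follows the classical cocycle route rather than the paper's geometric one, and it has a real gap at the hexagonal relation.

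The path $(s\circ t)^3$ has \emph{six} segments, not three. When you write ``pulling each piece back to the base contour produces $\sum_i P_f(g_i\cdot(X_0,X_1))$'' and then list only the three cumulative matrices $I$, $ST$, $(ST)^2$, you are silently discarding the three $t$-segments. But the $t$-segments do \emph{not} pull back to the base contour $i\infty\to 0$: their lifts to $\mathfrak{H}$ are not geodesics between cusps but tangential loops at the cusps. What the paper actually does is lift the hexagonal path via the $\lambda$ uniformization to the boundary of half the $\Gamma(2)$ fundamental domain $\mathcal{F}$; that boundary decomposes into three geodesic arcs (the $s$-lifts, each an image of $i\infty\to 0$) together with three cusp-omitting arcs $\varsigma_0,\varsigma_1,\varsigma_{i\infty}$ (the lifts of the $t^{-1}$-segments), and the paper argues explicitly that $\int_{\varsigma_j}f(\tau)(X_0\tau+X_1)^k\,d\tau=0$ because $f$ is a cusp form and each $\varsigma_j$ runs between identical tangential basepoints at a cusp. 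Only after this vanishing is established do three $P_f$ terms survive. Algebraically the point is $r_f(T)=0$ (since $T$ fixes $i\infty$), but you never isolate or justify it, and it is precisely this step that ties the Manin relation to the topology of $\mathbb{P}^1\backslash\{0,1,\infty\}$ rather than merely to the presentation of $PSL(2,\mathbb{Z})$. Your closing paragraph flags the $S_3$-twisting as ``the hardest part'' but misidentifies the content: the delicate issue is not the sign bookkeeping in the cocycle, it is why the $t$-pieces contribute nothing.

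A smaller issue: your integral for $P_f$ (exponent $k-2$, linear form $X_0-zX_1$, contour $0\to i\infty$) is the standard Manin convention, not the paper's $\int_{i\infty\to 0}f(\tau)(X_0\tau+X_1)^k\,d\tau$ with the action (\ref{e:fva}) on formal variables; you should work in the paper's normalization throughout.
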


In \S \ref{s:2}, period polynomials associated to Picard modular forms are defined. These forms are modular for the group $PU(2,1; \mathbb{Z}[\rho])$, into which a (non-faithful) monodromy representation for $\mathcal{M}_{0,5}$ associated to the period mapping for Picard curves is known to embed (cf. \cite{Shiga}). In an effort to emulate the extension of the monodromy representation in the $\mathcal{M}_{0,4}$ situation a surjective homomorphism of a group $\mathcal{G}_{(01,01)}^{(4)}$ of certain explicit paths in $\mathcal{M}_{0,5}$ onto $PU(2,1;\mathbb{Z}[\rho])$ is then given. 

The Picard period polynomial associated to the Picard modular form $f$ of Picard type $M$ (in terminology of Runge - see \S \ref{s:2.2} below) is next defined, by
\[
P_f(X_0,X_1,X_2)
= 
\int_D f(z_1,z_2)(z_1X_0+z_2X_1+X_2)^{3k-3}dz_1\wedge dz_2
\]
where $D$ is a certain 2-simplex in the two complex variables $z_1$ and $z_2$ on the Siegel domain (complex hyperbolic space)
\[
\mathbb{H}_{\mathbb{C}}^2
=
\{[z_1:z_2:1] \in \mathbb{P}^2(\mathbb{C})\;:\;
2\text{ Re }(z_1)+|z_2|^2 <0\}
\]
and $X_0, X_1$ and $X_2$ are formal variables. The 2-simplex $D$ is defined by analogy with the path in the upper half space over which the usual (elliptic) period polynomials are defined. After establishing the non-trivial fact that the integral converges, we go on to prove:
\begin{introthmB}
The period polynomial $P_f(X_0,X_1,X_2)$ for the Picard modular form $f$ of type $M$ and weight $k$ satisfies the following relations, each of which corresponds to a  defining relation for a certain presentation of  $PU(2,1;\mathbb{Z}[\rho]):$
\begin{equation}
\label{e:R^2intro}
P_f(X_0,X_1,X_2) = -P_f(X_2,-X_1,X_0)
\end{equation}

\begin{multline}
P_f(X_0,X_1,X_2)+(-\rho^2)^{k-1}P_f(X_0,-\rho X_1,X_2)+\rho^{k-1} P_f(X_0,\rho^2 X_1,X_2)
\\
\label{e:r1^6intro}
 +(-1)^{k-1}P_f(X_0,-X_1,X_2)+\rho^{2k-2}P_f(X_0,\rho X_1,X_2)+(-\rho)^{k-1}P_f(X_0,-\rho^2 X_1,X_2) =0
\end{multline}

\begin{multline}
\label{e:PMintro}
P_f(X_0,X_1,X_2)+\rho^{k-1}P_f(\rho^2 X_0+X_1+X_2, \rho^2 X_0-\rho^2 X_1, X_0) 
\\
 + \rho^{2k-2}P_f(\rho^2 X_2,\rho^2 X_2 - X_1, \rho^2 X_0+X_1+X_2) =0
\end{multline}


and the pair of relations
\begin{multline}
\label{e:pent1intro}
P_f(X_0,X_1,X_2)+\rho^{k-1}P_f(X_0,-\rho^2 X_0+\rho^2 X_1, \rho^2 X_0+X_1+X_2)
\\
=
(-\rho^2)^{k-1}P_f(X_0,\rho X_0 - \rho X_1, \rho^2 X_0+X_1+X_2)
\end{multline}
and
\begin{multline}
\label{e:pent2intro}
P_f(X_0,X_1,X_2)+\rho^{2k-2}P_f(X_0, X_0+\rho X_1, \rho X_0-\rho X_1+X_2)\\
=
(-\rho)^{k-1}P_f(X_0,-\rho X_0 - \rho^2 X_1, \rho X_0+-\rho X_1+X_2)
\end{multline}
coming from a five factor ``pentagonal'' defining relation on $PU(2,1;\mathbb{Z}[\rho]).$

Moreover, every defining relation in a certain presentation for $PU(2,1;\mathbb{Z}[\rho])$ yields one or more of the above relations on $P_f(X_0,X_1,X_2).$ 
\end{introthmB}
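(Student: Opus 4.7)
The strategy is the geometric one used for Theorem A, transplanted into complex hyperbolic 2-space. The key observation is that
\[
\omega = f(z_1,z_2)(z_1X_0+z_2X_1+X_2)^{3k-3}\,dz_1\wedge dz_2
\]
is a holomorphic $(2,0)$-form on $\mathbb{H}_{\mathbb{C}}^2$ and hence automatically $d$-closed. Since $\mathbb{H}_{\mathbb{C}}^2$ is contractible, any real 2-cycle there bounds a 3-chain, and Stokes gives $\int\omega=0$ on that cycle. Thus every time a word $M_1M_2\cdots M_n$ in elements of $PU(2,1;\mathbb{Z}[\rho])$ equals the identity, the images $M_i(D)$ (with appropriate orientations inherited from the word) should fit together to form such a 2-cycle, and the vanishing of the corresponding sum of integrals is the desired identity on $P_f$.

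The translation from a single generator $M$ to a summand of $P_f$ proceeds in three separate transformations. First, the Picard modularity of type $M$ gives $f(Mz)=j(M,z)^k f(z)$ for the usual factor of automorphy $j(M,z)$ on $\mathbb{H}_{\mathbb{C}}^2$; second, the change of variables $z\mapsto Mz$ produces a Jacobian of the form $j(M,z)^{-3}$ (from the $3\times 3$ projective action on the Siegel model), which combines with $f$ to give a total factor $j(M,z)^{k-3}$; third, the linear form $z_1X_0+z_2X_1+X_2$ pulls back to $j(M,z)^{-1}$ times a linear form in $z$ with coefficients a linear function $L_M(X_0,X_1,X_2)$ of the original variables, and raising to the $(3k-3)$-th power contributes $j(M,z)^{-(3k-3)}$. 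Combining, all automorphy factors cancel except a scalar root-of-unity (the factors $\rho^{k-1}$, $(-\rho^2)^{k-1}$, etc. in the statement), and the substitution $X\mapsto L_M(X)$ produces exactly the linear change of the $X$-variables displayed in each line of Theorem B. So if $M_1\cdots M_n=1$ in $PU(2,1;\mathbb{Z}[\rho])$, the sum $\sum_j \lambda_{M_1\cdots M_j}\,P_f(L_{M_1\cdots M_j}(X_0,X_1,X_2))$ equals the integral of $\omega$ over the 2-chain $\sum_j M_1\cdots M_j(D)$, which vanishes whenever that chain is a cycle.

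Given this, I would execute the proof by walking through the Falbel--Parker presentation one defining relation at a time. The relation $R^2=1$ gives the two-term identity \eqref{e:R^2intro}; the order-6 relation $r_1^6=1$ (a rotation fixing the central axis) yields the six-term identity \eqref{e:r1^6intro}; the PM-relation, of order 3, gives \eqref{e:PMintro}; and the single pentagonal defining relation splits, upon tracing through how the simplex $D$ moves under the two length-2 and length-3 subwords, into the two three-term identities \eqref{e:pent1intro} and \eqref{e:pent2intro}. The very last sentence of the theorem is then a direct consequence: since these were all the defining relations, each produces at least one identity on $P_f$.

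The main obstacle, and the step deserving the most care, is verifying that the 2-chain $\sum_j M_1\cdots M_j(D)$ is actually a 2-cycle, i.e.\ that the boundary faces of adjacent translates of $D$ match up with opposite orientations. This requires choosing $D$ compatibly with the Falbel--Parker fundamental domain so that each face of $D$ lies in a hyperplane stabilized by the appropriate generator, and then using the (known) combinatorial structure of that fundamental domain. The pentagonal relation is the subtlest: the reason it produces a \emph{pair} of three-term identities rather than a single five-term one is that the cycle assembled from the five translates of $D$ splits, in the chain complex, into two triangular subcycles sharing a common face. Once this decomposition is identified, applying Stokes to each subcycle gives precisely \eqref{e:pent1intro} and \eqref{e:pent2intro}. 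The remaining computations (extracting $j(M,z)$, the Jacobian, and $L_M$ for each generator, and checking that the composite scalars match the stated roots of unity) are routine linear algebra in $PU(2,1;\mathbb{Z}[\rho])$.
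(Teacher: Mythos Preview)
Your broad strategy---the $(2,0)$-form $\omega$ is $d$-closed, it is $PU(2,1;\mathbb{Z}[\rho])$-invariant in the combined $(z,X)$-action, and so whenever translates of $D$ assemble into a cycle the integral vanishes---is exactly the engine the paper uses. The transformation calculus you describe (automorphy factor, Jacobian, linear substitution on the $X_j$, residual root-of-unity) is also correct and matches the paper's computation.

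Where your proposal diverges from the paper, and where there is a genuine gap, is in the verification that the translates of $D$ really do form the required cycles. You defer this to ``the (known) combinatorial structure of the Falbel--Parker fundamental domain,'' but $D$ is \emph{not} a face of that $4$-real-dimensional polytope: it is a bespoke $2$-simplex built from the fixed points $[-1:-\rho:1]$, $[-1:0:1]$, $[-1:1:1]$ of $R_2,R_1,R_3$ on the isometric sphere $\mathcal{S}_R$, joined by geodesics to $q_\infty$ and then reflected by $R$. The paper establishes each relation by direct inspection of how a given generator moves this specific object: $R$ sends $D$ to itself with reversed $z_1$-orientation; $R_1$ rotates the $z_2$-coordinate by $-\rho$ so the six rotates tile a cycle; and $(RP)$ cycles the cusps $q_\infty\mapsto[0:0:1]\mapsto[\rho^2:1:1]\mapsto q_\infty$, so each geodesic $\gamma_\beta\subset D$ and its two $(RP)$-images bound a triangle. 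None of this is read off from Falbel--Parker's face-pairings; it is checked by hand from the explicit description of $D$.

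Your account of the pentagonal relation is where the mismatch is sharpest. The paper does \emph{not} find two triangular subcycles of $\sum_j M_1\cdots M_j(D)$ sharing a face. Instead it splits $D=L\cup R\!*\!L$ where $L$ is the half lying over $j_3$ towards $q_\infty$, observes that each of $P,R_1^{-1},P^{-1}$ fixes $q_\infty$ and hence permutes the geodesics $m_\alpha\subset L$, and then tracks the leading and trailing edges of $L$ through the word $PR_1^{-1}P^{-1}R_1^{-1}P$. The outcome is an \emph{equality of regions}: $L\cup P(L)$ coincides with $(R_1^{-1}P)(L)$, giving \eqref{e:pent1intro}, and symmetrically $L\cup P^{-1}(L)$ coincides with $(R_1P^{-1})(L)$ (after the palindromic rewriting), giving \eqref{e:pent2intro}. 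The pair of relations thus comes from the decomposition $D=L\cup R\!*\!L$ together with the palindromic form of the word, not from a decomposition of a $5$-cycle into triangles. Without the half-domain $L$ and the explicit edge-tracking, your Stokes argument has no handle on why five terms collapse to two three-term identities.
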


The relation (\ref{e:R^2intro}) is the Picard period polynomial analogue of (\ref{e:DA1intro}) while 
(\ref{e:PMintro}) corresponds to the Manin relation (\ref{e:DA2intro}).

Owing to the fact that $\mathcal{G}_{(01,01)}^{(4)}$ turns out to be a free group, an explicit analogue of the topological part of Theorem A has no content. Nevertheless, if $\mathfrak{z}$ denotes a tangential basepoint of $\mathcal{M}_{0,5}$,  as a consequence of Theorem B we do have the 
\begin{introthmC}
Every contractible path in $\pi_1(\mathcal{M}_{0,5}, \mathfrak{z})$ gives rise to a relation on the period polynomial $P_f(X_0,X_1,X_2)$ for the Picard modular form $f$ of Picard type $M$.
\end{introthmC}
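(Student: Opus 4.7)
The plan is to deduce the corollary directly from Theorem B together with the monodromy representation of $\pi_1(\mathcal{M}_{0,5}, \mathfrak{z})$ into $PU(2,1;\mathbb{Z}[\rho])$ furnished by Shiga's period map for Picard curves, which factors through the surjection $\mathcal{G}_{(01,01)}^{(4)} \twoheadrightarrow PU(2,1;\mathbb{Z}[\rho])$ constructed in \S\ref{s:2}. If $\gamma \in \pi_1(\mathcal{M}_{0,5}, \mathfrak{z})$ is contractible, then it represents the trivial element in the fundamental group and is therefore sent by this representation to the identity in $PU(2,1;\mathbb{Z}[\rho])$.

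Second, I would express this image as a word in the Falbel--Parker generators of $PU(2,1;\mathbb{Z}[\rho])$ that are used in \S\ref{s:2}; being trivial in the group, the word is a product of conjugates of the defining relations of that presentation. By Theorem B, each defining relation contributes one (or two, in the pentagonal case) of the polynomial identities \eqref{e:R^2intro}--\eqref{e:pent2intro} on $P_f$. Concatenating these polynomial identities according to the chosen factorization produces the relation on $P_f(X_0,X_1,X_2)$ associated with $\gamma$, and by construction it is a consequence of the five relations already enumerated in Theorem B.

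The main subtlety, where most of the effort lies, is ensuring that this procedure is well defined: different factorizations of the trivial word as a product of conjugates of defining relations could in principle yield formally distinct polynomial identities. This will be resolved by interpreting the substitutions appearing in \eqref{e:R^2intro}--\eqref{e:pent2intro} as realizing the natural action of $PU(2,1;\mathbb{Z}[\rho])$ on the space of polynomials of degree $3k-3$ in $X_0, X_1, X_2$, twisted by the automorphy factor attached to $f$; under this action the identity element of the group necessarily acts trivially, so two factorizations of the image of $\gamma$ compose to the same (trivial) transformation of $P_f$, making the polynomial relations they yield logically equivalent. With this consistency in place the assignment $\gamma \mapsto \text{relation on } P_f$ is genuine, and the corollary follows.
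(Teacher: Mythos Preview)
Your approach is essentially the same as the paper's: the paper derives the corollary in one line by ``combining Theorem \ref{t:2} with the monodromy mapping from \S\ref{s:2.1},'' which is precisely your plan of pushing a contractible loop through the monodromy representation into $PU(2,1;\mathbb{Z}[\rho])$, writing the resulting trivial word in generators, and invoking Theorem B on the defining relations. Your third paragraph on well-definedness goes beyond what the paper actually argues---the paper deliberately interprets ``gives rise to a relation'' loosely (either the decomposition in generators yields one of the Theorem B relations, or the resulting relation is trivial), so this extra care is not needed for the corollary as stated.
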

The sense in which the relation on $P_f(X_0,X_1,X_2)$ ``arises'' from a given contractible path is as follows: either the monodromy representation can be used to decompose the image of the path in $PU(2, 1;\mathbb{Z}[\rho])$ in terms of generators of the latter group, or the relation on $P_f(X_0,X_1,X_2)$ is trivial. 
 The difference from Theorem B is that the $S_4$ action associated to $PU(2,1;\mathbb{Z}[\rho])$ is not faithful on the set of relevant tangential basepoints of $\mathcal{M}_{0,5}$. This is why the pentagonal path (cf. \cite{Ihara:braids}) does not decompose into a five term relation on the Picard period polynomials. 


\section*{Acknowledgements}
I would like to thank Masanobu Kaneko for a very helpful comment which steered me towards a way to complete the proof of the main result. I would also like to thank Minhyong Kim for his sustained interest in my work. 




\section{The elliptic curve case}
\label{s:1}
\subsection{From elliptic curves to paths in $\mathcal{M}_{0,4}$ corresponding to elements of $PSL(2, \mathbb{Z})$}
\label{s:1.1}

The Legendre form of the Weierstrass equation of an elliptic curve $E_{\lambda}$ is 
\[
y^2 = x(x-1)(x-\lambda)
\]
and for each $\lambda \in \mathbb{P}^1\backslash\{0,1,\infty\}$ the curve $E_{\lambda}$ is isomorphic to the curves $E_{f(\lambda)}$ where $f(\lambda)$ is an element of the anharmonic group of transformations of $\mathbb{P}^1\backslash\{0,1,\infty\}$ given by 
\begin{equation}
\label{e:anharmonic}
\Lambda_{\lambda} := \left\{\lambda, 1-\lambda, \frac{1}{1-\lambda}, 
          \frac{\lambda}{\lambda - 1}, \frac{1}{\lambda}, \frac{\lambda - 1}{\lambda} \right\}.
          \end{equation}
There are six such curves for a given choice of $\lambda$, unless two or more of the elements of $\Lambda_{\lambda}$ are equal, which occurs only when $\lambda$ is a primitive 6th root of unity or equals $-1, 2$ or $\frac{1}{2}$ (cf. Proposition 1.7 in \cite{SilvermanAEC}). 

Fix $\lambda \in \mathbb{P}^1\backslash\{0,1,\infty\}$. Then we can regard $E_{\lambda}$ as a two-sheeted cover of the $x$-sphere by making branch cuts on two respective copies $P_0$ and $P_1$ of $\mathbb{P}^1\backslash\{0,1,\infty\}$ along the geodesics $\gamma^j_{0}$ between 0 and $\infty$ and $\gamma^j_{1}$ between $1$ and $\lambda$  in $P_j$ for $j=0,1$, and gluing along these cuts. 
Now let $A_{\lambda}$ denote a loop in $E_{\lambda}$ about 0 and 1 which crosses each of the four geodesics $\gamma^j_k$ exactly once, and let $B_{\lambda}$ denote a loop in $P_0$ about 1 and $\lambda$ which does not cross any of the four geodesics $\gamma_j^k$. Then $\{A_{\lambda},B_{\lambda}\}$ may be regarded as a basis of the homology group $H_1(E_{\lambda}, \mathbb{Z}).$

The periods of $E_{\lambda}$ are then
\[
\omega_1 = \int_{A_{\lambda}} \frac{dx}{y}
\;\;\;\;\;\;\; {\text{ and }}\;\;\;\;\;\;\;
\omega_2 = \int_{B_{\lambda}}\frac{dx}{y},
\]
and $\tau_{\lambda}:= \omega_{2}/\omega_1$ lies in  $\mathfrak{H},$ the upper half plane.

If we now let $\lambda$ vary in $\mathbb{P}^1\backslash\{0,1,\infty\}$, then 
$
\lambda \mapsto \tau_{\lambda}
$
is a multi-valued map $\Psi$ of 
$
\mathbb{P}^1\backslash\{0,1,\infty\}\rightarrow
\mathfrak{H},
$ the inverse of which is the classical lambda function giving the uniformization of $\mathbb{P}^1\backslash\{0,1,\infty\}$ by $\mathfrak{H}$. 

Given $\gamma \in \pi_1(\mathbb{P}^1\backslash \{0,1,\infty\}, \lambda_0)$ for some basepoint $\lambda_0,$
 the analytic continuation of the restriction of $\Psi$ to some simply connected neighborhood of $\lambda_0$  along $\gamma$ gives rise to a map $\gamma \mapsto N(\gamma)$ via the transformation of $\Psi$ to $N(\gamma) \circ \Psi$ where $N(\gamma)$ is some automorphism of $\mathfrak{H}$ - i.e. $N(\gamma) \in PSL(2, \mathbb{R}).$ In other words, we have a map $\pi_1(\mathbb{P}^1\backslash\{0,1,\infty\} , \lambda_0)\rightarrow PSL(2, \mathbb{R})$, the image of which is called the monodromy group of $\Psi.$  

Denote loops based at $\lambda_0$ about 0 and 1 respectively by  $\sigma_0$ and $\sigma_1.$ These can be viewed as the generators of $\pi_1(\mathbb{P}^1\backslash\{0,1,\infty\} , \lambda_0)$. Now $N(\sigma_j)$ can be computed via the effect of the analytic continuation along $\sigma_j$ on the homology basis $\{A_{\lambda}, B_{\lambda}\}$. One sees that 
\[
N(\sigma_0)\; ^t[B_{\lambda}\;\; A_{\lambda}]
=
\left[ 
\begin{array}{cc}
1 & 2
\\
0 & 1 
\end{array}
\right] 
\left[
\begin{array}{c}
B_{\lambda}
\\
A_{\lambda}
\end{array}
\right]
\]
while 
\[
N(\sigma_1)\; ^t[B_{\lambda}\;\; A_{\lambda}]
=
\left[ 
\begin{array}{cc}
-1 & 0
\\
2 & -1 
\end{array}
\right] 
\left[
\begin{array}{c}
B_{\lambda}
\\
A_{\lambda}
\end{array}
\right].
\]
Denoting generators of $PSL(2, \mathbb{Z})$ by 
\begin{equation}
\label{e:ST}
S = \left[\begin{array}{cc}
0 & 1
\\ -1 & 0
\end{array}
\right]
\;\;\;\;\;\; {\text{ 
and }}\;\;\;\;\;\;
T = \left[ \begin{array}{cc}
1 & 1
\\
0 & 1 
\end{array}
\right]
\end{equation}
one finds that $N(\sigma_0) = T^2$ while $N(\sigma_1) = ST^2S$. Since the latter matrices generate the congruence subgroup $\Gamma(2) \leq PSL(2, \mathbb{Z})$, which is a free group on the two generators, in this case the monodromy representation is faithful.

Now as Schneps explains in \S 2.5 of \cite{Buffetal:99}, the fundamental group of the moduli space $\mathcal{M}_{0,n}$ of genus 0 curves with $n$ marked points may be identified with the kernel $K(0,n)$ of the projection of the mapping class group $M(0,n)$ to $S_n$ (where $n$ should be replaced by $n-1$ when $n=4$);
and this identification may be extended to a mapping from $M(0,n)$ itself to a certain set of path classes emanating from a so-called real point of $\mathcal{M}_{0,n}$. When the real point in question admits no automorphisms (which could arise from the orbifold structure on $\mathcal{M}_{0,n}$) the map is an isomorphism and the path space thus acquires a group structure. 

Schneps gives her map for the configuration space model of $\mathcal{M}_{0,n}$ but an explicit version of this map in the complex model would be very useful for computations. In \cite{Joyner:qm} the author gave such a map for the $n=4$ case. This yields a correspondence between the elements of all of $M(0,3) = PSL(2, \mathbb{Z})$, and certain homotopy classes of paths in $\mathbb{P}^1\backslash\{0,1,\infty\}$. In order to describe this correspondence, first we need to introduce the notion of tangential basepoint, which is due to Deligne \cite{Deligne:89}. To this end, suppose that $X = \overline{X}\backslash S$ is a smooth curve over $\mathbb{C}$ where $S$ denotes some finite set of points. Then at any omitted point $a\in S$, it is possible to define the fundamental group of $X$ in the direction of a specified tangent vector to $\overline{X}$ at $a$. A classical way to do this (cf. for example \cite{Hain:CP}) is to set
\[
P_{{v}_0, {v}_1}:=
\{\gamma:[0,1] \rightarrow \overline{X} \;|\; 
\gamma'(0) = {v}_0; \gamma'(1) = -{v}_1; \gamma((0,1)) \subset X\}
\]
(where ${v}_j \in T_{a_j}$ is a tangent vector at $a_j \in S$ for $j=0,1$), and then denote the set of path components of $P_{{v}_0,{v}_1}$ by $\pi_1(X, {{v}_0, {v}_1})$. When ${v}_0 = {v}_1$ this gives the fundamental group based at ${v}_0$, denoted $\pi_1(X, {v}_0)$.

Now the anharmonic group $\Lambda_{\lambda} \simeq S_3$ of (\ref{e:anharmonic}) both permutes $0,1$ and $\infty$, and gives the group of all linear fractional transformations of $\mathbb{P}^1\backslash\{0,1,\infty\}$. Therefore given a homotopy class 
$
\alpha \in \pi_1(\mathbb{P}^1\backslash \{0,1,\infty\}, \overrightarrow{01}, \overrightarrow{ab})
$ 
where $a,b$ are distinct elements of $\{0,1,\infty\}$ and $\overrightarrow{ab}$ denotes the unit vector based at $a$ in the direction of $b$ along the real line in $\mathbb{P}^1(\mathbb{C}),$ the permutation $\sigma_{ab} \in S_3$ sending $0$ to $a$ and $1$ to $b$ corresponds to a unique element of $\Lambda_{\lambda}$, say $\lambda_{ab}$, so given any $
\beta \in \pi_1(\mathbb{P}^1\backslash \{0,1,\infty\}, \overrightarrow{01}, \overrightarrow{cd})
$ 
for any distinct $c,d \in \{0,1,\infty\}$ then $
\lambda_{ab}(\beta) \in \pi_1(\mathbb{P}^1\backslash \{0,1,\infty\}, \overrightarrow{ab}, \overrightarrow{\sigma_{ab}c\;\sigma_{ab}d}).$ 
But then we can define the product of $\alpha$ and $\beta$ in 
\[
\mathcal{G}_{01}\;:= \bigcup_{a,b \in \{0,1,\infty\}, a \neq b} \pi_{1}(\mathbb{P}^1\backslash\{0,1,\infty\}, \overrightarrow{01}, \overrightarrow{ab})
\]
as the path $\alpha$ followed by $\lambda_{ab}(\beta).$ Now let $s$ denote the tangential path along the unit interval from $\overrightarrow{01}$ to $\overrightarrow{10}$, and let $t$ denote the tangential path from $\overrightarrow{01}$ to $\overrightarrow{0\infty}$ which is a loop tangential to the real line lying in the upper half plane. In \cite{Joyner:qm}, $\mathcal{G}_{01}$ with the group operation described above is shown to be isomorphic to $PSL(2, \mathbb{Z})$ under the association of $s$ and $t$ respectively with the matrices $S$ and $T$ of (\ref{e:ST}). With this notation a presentation for $\mathcal{G}_{01}$ is:
\[
\mathcal{G}_{01} = \;< s, t \;|\;s^2 = (st)^3 = 1 >
.
\]
The short exact sequence 
\begin{equation}
\label{e:PSLses}
1 \rightarrow K(0,3) \rightarrow PSL(2, \mathbb{Z}) \rightarrow S_3 \rightarrow 1
\end{equation}
is then faithfully represented on 
\begin{equation}
\label{e:P1ses}
1 \rightarrow \pi_1(\mathbb{P}^1\backslash\{0,1,\infty\}, \overrightarrow{01}) \rightarrow \mathcal{G}_{01} \rightarrow S_3 \rightarrow 1,
\end{equation}
where the multiplication on $\mathcal{G}_{01}$ is ``twisted'' by the action of $S_3$ on $\mathbb{P}^1\backslash\{0,1,\infty\}$ as described above.

\subsection{The period polynomials}
\label{s:1.2}

A function $f$ of $\mathfrak{H}$ into $\mathbb{C}$ is a cusp form for $PSL(2, \mathbb{Z})$ if $f$ admits of a Fourier series expansion of the form 
\begin{equation}
\label{e:Fourier}
f(q) = \sum_{n=1}^{\infty}a_nq^n
\end{equation}
where $q = e^{2\pi i \tau}$ for $\tau \in \mathfrak{H}$, and satisfies 
\[
f(\gamma (\tau)) = j_{\gamma}^k(\tau) f(\tau)
\]
for every $\tau \in PSL(2, \mathbb{Z})$, where $j_{\gamma}(\tau)$ is the Jacobian determinant $(a_{10}\tau + a_{11})$ if $\gamma = (a_{ij})_{0\leq i,j \leq 1}$. Then
\[
\omega_f: = f(\tau) (X_0 \tau+ X_1)^k d \tau
\]
is a differential form on the modular curve since it is invariant under the action of $PSL(2, \mathbb{Z})$ defined by the usual action 
\[
\tau \mapsto \frac{a_{00} \tau +a_{01}}{a_{10}\tau +a_{11}}
\]
of $\gamma = (a_{ij})_{0\leq i,j \leq 1}$ on $\mathfrak{H}$ and the analogous action induced by $^t\gamma^{-1}$ on $(X_0,X_1)$ - i.e. for $k=0,1$
\begin{equation}
\label{e:fva}
X_k \mapsto (-1)^{k}a_{(1-k)1}X_0 + (-1)^{k+1}a_{(1-k)0}X_1.
\end{equation}
The latter is not how the action on formal variables - which ought to be thought of as basis elements for homology (cf. \cite{BrownHainDeRham}) - is usually defined, but will lend itself to generalization later. 

Then we define
the (elliptic) period polynomial associated to the cusp form $f$ to be 
\[
P_f(X_0,X_1):= \int_{{i \infty}\rightarrow 0} f(\tau) (X_0 \tau+ X_1)^k d \tau
\]
where $i\infty \rightarrow 0$ denotes the (geodesic) path from $i \infty$ to $0$ along the imaginary axis in $\mathfrak{H}$. 
The integral converges because of (\ref{e:Fourier}) - cf. \cite{LangIntroModForms}. 

The classical lambda function $\lambda$ which gives the uniformization of $\mathbb{P}^1\backslash\{0,1,\infty\}$ by $\mathfrak{H}$ maps the path of integration to the tangential path $s$ (cf. \cite{Chandra}), which we associate with the element $S$ of $PSL(2, \mathbb{Z})$. 

\begin{theorem}
\label{t:1}
Every contractible path in $\pi_1(\mathbb{P}^1\backslash\{0,1,\infty\}, \overrightarrow{01})$ gives a relation on $P_f(X_0,X_1)$. In particular, 
\begin{equation}
\label{e:DA1}
P_f(X_0,X_1) = -P_f(-X_1, X_0)
\end{equation}
corresponds to the path $s \circ s = 1$ and 
\begin{equation}
\label{e:DA2}
P_f(X_0,X_1) + P_f(-X_0-X_1, X_0) + P_f(X_1, -X_0-X_1) = 0
\end{equation}
corresponds to the contractibility of the hexagonal path $(s\circ t)^3$.
\end{theorem}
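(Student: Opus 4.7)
The plan is to exploit the $PSL(2,\mathbb{Z})$-invariance of $\omega_f = f(\tau)(X_0\tau+X_1)^k\,d\tau$ on the simply connected space $\mathfrak{H}$: every closed loop there has vanishing $\omega_f$-integral. Via the lambda uniformization and the isomorphism $\mathcal{G}_{01}\cong PSL(2,\mathbb{Z})$ of \S\ref{s:1.1}, a word in the generators $s, t$ of $\mathcal{G}_{01}$ that represents the identity lifts to a closed concatenation of tangential paths in $\mathfrak{H}$, whose successive segments are $PSL(2,\mathbb{Z})$-translates of two basic paths: the geodesic $i\infty\to 0$ (for $s$, corresponding to $S$) and a horocyclic path at the cusp $i\infty$ (for $t$, corresponding to $T$). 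The $S_3$-twisting in the multiplication rule for $\mathcal{G}_{01}$ accumulates precisely as the successive matrix translates applied to the later letters.

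On each segment lifted as $\gamma\cdot p$ for one of the two basic paths $p$, I would substitute $\tau\mapsto\gamma^{-1}\tau$ and invoke invariance of $\omega_f$ under the combined action of $\gamma$ on $\tau$ and on $(X_0, X_1)$ via (\ref{e:fva}); this rewrites $\int_{\gamma\cdot p}\omega_f(X_0, X_1)$ as $\int_p \omega_f(X_0', X_1')$, where $(X_0', X_1')$ is the (\ref{e:fva})-image of $(X_0, X_1)$ under $\gamma$. Summing over the segments of a null-homotopic word and using that the $\omega_f$-integral around the resulting closed loop vanishes produces a linear identity among period polynomials evaluated at the corresponding transformed arguments.

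For (\ref{e:DA1}), the lift of $s\circ s$ in $\mathfrak{H}$ is the closed path $(i\infty\to 0)\cup(0\to i\infty)$, the second factor being the $S$-translate of the first. Applying (\ref{e:fva}) with $S$ sends $(X_0, X_1)\mapsto (X_1, -X_0)$, so vanishing of the total integral reads $P_f(X_0, X_1) + P_f(X_1, -X_0) = 0$, equivalent to (\ref{e:DA1}) after using the even degree of $P_f$. For (\ref{e:DA2}), the hexagonal lift of $(s\circ t)^3$ decomposes into three $s$-segments and three $t$-segments, the $s$-segments translated by $1, ST, (ST)^2$ respectively. Each $t$-segment runs along a horocycle at a cusp of $\mathfrak{H}$ where $f$ vanishes exponentially by (\ref{e:Fourier}), so it contributes $0$. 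The three remaining $s$-contributions, after applying (\ref{e:fva}) to $1, ST, (ST)^2$, yield precisely the arguments $(X_0, X_1)$, $(-X_0-X_1, X_0)$, and $(X_1, -X_0-X_1)$ appearing in (\ref{e:DA2}).

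The main obstacle will be careful bookkeeping: pinning down the left/right conventions of the twisted concatenation in $\mathcal{G}_{01}$ against the order of matrix multiplication in $PSL(2,\mathbb{Z})$ so that the cumulative translates close up the hexagon correctly, and justifying the vanishing of the $t$-segment integrals rigorously via the tangential-basepoint formalism of Deligne together with the cuspidal exponential decay. Once calibrated, the first assertion of the theorem follows from the presentation $\mathcal{G}_{01} = \langle s, t \mid s^2, (st)^3\rangle$: every word in $s, t$ representing the identity is a product of conjugates of these two relators, and so every induced $P_f$-identity is a linear combination of (\ref{e:DA1}) and (\ref{e:DA2}).
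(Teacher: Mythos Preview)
Your proposal is correct and follows essentially the same approach as the paper: lift a word in $s,t$ representing the identity to a closed path in $\mathfrak{H}$, decompose it into $PSL(2,\mathbb{Z})$-translates of the basic segment $i\infty\to 0$ (for $s$) and tangential arcs at the cusps (for $t$), observe that the $t$-segments contribute nothing, and use the invariance of $\omega_f$ to rewrite each remaining integral as $P_f$ at transformed arguments. The only cosmetic differences are that the paper lifts $(s\circ t^{-1})^3$ rather than $(s\circ t)^3$ for convenience in describing the boundary of half of the $\Gamma(2)$-fundamental domain, and handles the general ``every contractible path'' assertion by a direct lift of an arbitrary null-homotopic word rather than by invoking the presentation $\langle s,t\mid s^2,(st)^3\rangle$ as you do; both routes are equivalent.
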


{\bf Remark:} The equation (\ref{e:DA1}) is the linearized (Lie algebra) version of the equation
\[
\Phi_{KZ}(A, B) = \Phi_{KZ}(-B,-A)^{-1}
\]
satisfied by the Drinfel'd associator $\Phi_{KZ}(A,B)$ (cf. \cite{Cartier}). Here the action of $S$ on the formal variables $A$ and $B$ comes from the action of the anharmonic group on the Knizhnik-Zamolodchikov (KZ) equation (cf. \cite{Joyner:qm}).

Similarly, (\ref{e:DA2}) is the logarithmic analogue of the hexagonal relation
\[
\Phi_{KZ}(A,B)\exp(i \pi B) \Phi_{KZ}(-B, A-B)\exp(i \pi (A-B)) \Phi_{KZ}(B-A,-A) \exp(i\pi (-A)) = 1
\]
In \cite{Joyner:qm} it is shown that these equations for $\Phi_{KZ}$ may be seen to arise from analytic continuation of a specific flat section (namely the polylogarithm generating function) of the universal prounipotent bundle with connection on $\mathbb{P}^1\backslash\{0,1,\infty\}$ along $s\circ s$ and $(s\circ t)^3$ respectively.

{\em Proof of Theorem \ref{t:1}.}  
Since the fundamental group $\pi_1(\mathbb{P}^1\backslash\{0,1,\infty\}, \overrightarrow{01})$ is generated by $t^2$ and $s\circ t^2 \circ s$, any contractible loop $\gamma$ in $\mathbb{P}^1\backslash \{0,1, \infty\}$ based at $\overrightarrow{01}$, regarded as an element of this fundamental group, can be decomposed as the concatenation of the paths $s$ and $t$; and by prepending the trivial loop $s\circ s$ if necessary, $\gamma$ is homotopic to a loop $\gamma_s$ beginning with the path $s$. Then $\gamma_s$ lifts to a contractible path in a fundamental domain for $\Gamma(2)$ in the $PSL(2, \mathbb{Z})$ action on $\mathfrak{H}$, for example (again cf. \cite{Chandra}) the set 
\[
\mathcal{F}:=\{\tau \in \mathfrak{H}\;|\;
-1<\Re(\tau)\leq 1; \;
\left|\tau-\frac{1}{2}\right|\geq\frac{1}{2};\;\left|\tau+\frac{1}{2}\right|> \frac{1}{2}\},
\]
which starts with the path $i\infty \rightarrow 0$. Because of the one-to-one correspondence between $\mathcal{G}_{01}$ and $PSL(2, \mathbb{Z})$, this lifted path (say $G_s$) can be viewed as the union of $i\infty \rightarrow 0$ under the action of a succession of elements of $PSL(2, \mathbb{Z})$. Since $G_s$ is contractible, the composition of these elements of $PSL(2, \mathbb{Z})$ gives the identity and thus we have a corresponding relation in $PSL(2, \mathbb{Z})$,
 say $A_n \circ A_{n-1}\circ  \cdots\circ A_1\circ A_0 = I$. Consequently we have the equation:
\begin{align*}
0 &= \int_{G_s} f(\tau)(X_0\tau+X_1)^k d \tau
\\
&=
\int_{A_0(i\infty\rightarrow 0)} + \int_{A_1 \circ A_0 (i \infty\rightarrow 0)} + \cdots + \int_{A_{n-1}\circ \cdots \circ A_0(i \infty\rightarrow 0)} + \int_{i \infty \rightarrow 0} f(\tau) (X_0\tau + X_1)^k d \tau
\end{align*}

Now if $A \in PSL(2, \mathbb{Z})$, notice that
\begin{align*}
& \int_{A(i \infty \rightarrow 0)} f(\tau) (X_0\tau + X_1)^k d \tau
\\
&=
\int_{i \infty\rightarrow 0} f(A^{-1}\tau) (X_0A^{-1}\tau + X_1)^k d A^{-1}\tau
.
\end{align*}
But here, $f(\tau) (X_0\tau + X_1)^k d \tau$ is invariant under the $A^{-1}$ action. Hence, 
\[
f(A^{-1}\tau) (X_0 A^{-1}\tau + X_1)^k d A^{-1}\tau
=
f(\tau)(AX_0 \tau + AX_1)^k d\tau
\]
where $AX_j$ denotes the image of $X_j$ under the action of $A$ given by (\ref{e:fva}) above for $j=0,1$.

The statements pertaining to the trivial paths $s\circ s$ and $(s\circ t)^3$ respectively now follow from straightforward calculations, although a few words are in order to explain the vanishing of certain terms in the second relation.

First we should remark that since the cusps $i\infty$ and 0 are not in $\mathfrak{H}$ the relevant paths must be suitably interpreted. This is easily handled using a limiting procedure analogous to how we are thinking about tangential basepoints in $\mathbb{P}^1\backslash\{0,1,\infty\}$.

Now we lift the trivial path around the real line in $\mathbb{P}^1\backslash\{0,1,\infty\}$ which avoids the punctures 1, $\infty$ and 0 via images of $t^{-1}$ based at $\overrightarrow{10},$ $\overrightarrow{\infty1}$ and $\overrightarrow{0\infty}$ respectively - i.e. we lift the path $(s\circ t^{-1})^3=1.$ Since $s \circ s = 1$ this path is equivalent to $(s\circ t)^3 = 1$. One checks (cf. \cite{Chandra}) that the upper half of $\mathbb{P}^1\backslash\{0,1,\infty\}$ is covered by the part of $\mathcal{F}$ with positive real part under the  uniformizing function $\lambda$. The lifting of $(s\circ t^{-1})^3 = 1$ is the boundary of this half of $\mathcal{F}$, with the cusps omitted due to the lifts of $t^{-1}$. To be more precise, notice that in $\mathcal{F}$, the geodesic from 0 to $i\infty$ and the geodesic from 0 to 1 are tangential at 0. Therefore the lift of the image of $t^{-1}$ under the element of the anharmonic group $z \mapsto 1-z$ corresponding to $s,$ to $\mathcal{F},$ is a path from the tangential basepoint $\overrightarrow{0, i \infty}$ to $\overrightarrow{0, 1}$ (which are identical elements of the tangent space to 0), say $\varsigma_0$, which effectively omits the cusp from the region bounded by the lift of $(s\circ t^{-1})^3$. The other two lifts of $t^{-1}$, say $\varsigma_1$ and $\varsigma_{i\infty}$, similarly omit the cusps at 1 and $1+i \infty = i \infty$. (For the latter this is clear from the disc model of $\mathfrak{H}.$) 

But then the integrals of $f(\tau)(X_0\tau+X_1)^k d\tau$ along $\varsigma_j$ are zero for each of $j=0,1$ and $i\infty.$ $\hfill \Box$

\section{The Picard curve case}
\label{s:2}
\subsection{From Picard curves to a representation of a certain group of paths in $\mathcal{M}_{0,5}$ on $PU(2,1; \mathbb{Z}[\rho])$}
\label{s:2.1}

Ideally we would like to emulate the $\mathcal{M}_{0,4}$ situation of the faithful representation of path classes on the modular group, in the case of $\mathcal{M}_{0,5}$. However, the monodromy representation onto a certain linear group $\Gamma_1$, which arises through a similar process to that discussed above in \S \ref{s:1.1}, turns out not to be faithful. Nevertheless, a monodromy representation restricted to a certain subgroup of the fundamental group onto $\Gamma_1$ can again be extended to a representation of a certain space of paths emanating from some (tangential) basepoint in $\mathcal{M}_{0,5},$ onto a suitable analogue of $PSL(2, \mathbb{Z})$, namely $PU(2, 1; \mathbb{Z}[\rho])$, and this section is devoted to developing this representation explicitly.

The curves $C(x_0,y_0)$ given by the equation
\[
w^3 = z(z-1)(z-x_0)(z-y_0)
\]
for $[x_0: y_0:1]$ in the space 
\[
M:= \{[x_0:y_0:1] \in \mathbb{P}^2(\mathbb{C})\;: \; x_0y_0(x_0-1)(y_0-1)(x_0-y_0) \neq 0\}
\]
are genus 3 compact curves known as Picard curves. 
One 
sees immediately that the parameter space is none other than the moduli space $\mathcal{M}_{0,5}$ of genus 0 curves with 5 marked points since this space may be seen to be isomorphic to
\[
(\mathbb{P}^1\backslash\{0,1,\infty\})^2 \backslash\Delta 
\]
where $\Delta =
\{(z,z)\in \mathbb{P}^2(\mathbb{C})\;:\; z \in \mathbb{P}^1\backslash\{0,1,\infty\}\},
$ by using the cross ratio. 
Now, as in the elliptic curve case discussed in \S \ref{s:1.1} above, analytic continuation along paths in the parameter space $\mathcal{M}_{0,5}$ gives rise to an action on periods of the Picard curves (which could also be viewed as an action on a homology basis for $H_1(C(x_0,y_0), \mathbb{Z})$ for some fixed $(x_0,y_0)\in \mathcal{M}_{0,5}$). This is discussed by Shiga in \cite{Shiga} so we omit the details but summarize the results we will use here.

Pick $x_0$ and $y_0$ on the positive real axis with $1< x_0 < y_0.$ Then, following the notation of \cite{Shiga}, set
\[
L_x = \{(x,y) \in \mathbb{C}^2 \;:\; y = y_0\}
\]
and 
\[
L_y:= \{(x,y) \in \mathbb{C}^2 \;:\;x = x_0\}
\]
and define the following positively oriented loops in $L_x$ based at $\alpha_0 = (x_0,y_0)$:
\\
$\delta_1$, a loop about $x=1$;
\\
$\delta_{y}$, a loop about $x=y_0$; 
\\
$\delta_0$, a loop about $x=0$; 
\\
and the positively oriented loops in $L_y$ also based at $\alpha_0$: 
\\
$\delta_{y,0}$, a loop about $y=0$; and 
\\
$\delta_{y,\infty}$, a loop about $y=\infty$. 
\\
These are to be regarded as paths in $\pi_1(\mathcal{M}_{0,5}, \alpha_0)$ along which we deform $C(x_0,y_0)$ by allowing the parameter $(x_0,y_0)$ to vary. This induces a monodromy transformation as in the elliptic curve case. Here analytic continuation along a path $\delta$ transforms the period mapping by an element $N_5(\delta)$ of $Sp(3, \mathbb{Z})$ (which acts upon the Siegel upper half space $\mathfrak{H}_3$) and Shiga works out the explicit symplectic transformations corresponding to the $\delta_j$ given above. In this case, there exist relations among three respective pairs of the periods
\[
\int_{E}\frac{dz}{w}
\]
for elements $E$ of a homology basis for $H_1(C(x_0,y_0), \mathbb{Z})$ and consequently the action of each symplectic element $N_5(\delta)$ induces an action of an element (say $g(\delta)$) of $PGL(3, \mathbb{Z}[\rho])$ on a triple of consisting of one period for each relation. The $g(\delta)$ act upon the two dimensional complex ball $\mathbb{H}^2_{\mathbb{C}}$ (hyperbolic space), which is given in non-homogeneous coordinates by 
\begin{equation}
\label{e:ball}
\{[z_1 : z_2 : 1] \in \mathbb{P}^2(\mathbb{C}) \; : \;
2 \text{Re}(z_1)+|z_2|^2 <0\}.
\end{equation}
In keeping with the notation of \cite{FalbelParkerPicard} which it will be convenient to use later, the matrices $g(\delta_j)$ computed by Shiga are given here after conjugation by 
\[
W:=
\left[
\begin{array}{ccc}
0&0&1
\\
1&0&0
\\
0&1&0
\end{array}\right].
\]
To wit, they are:
\begin{align*}
g(\delta_1)
 &= 
 \left[
\begin{array}{ccc}
1&0&0
\\
0&\rho&0
\\
0&0&1 
\end{array}\right]
\qquad &
g(\delta_{y,0})
& = 
 \left[
\begin{array}{ccc}
\rho^2&0&\rho - 1
\\
0&1&0
\\
\rho - 1 &0&-2\rho^2
\end{array}\right]
\\
g(\delta_y)&
 = 
 \left[
\begin{array}{ccc}
1&1- \rho^2&\rho -1
\\
0&\rho&\rho^2 - \rho
\\
0&0&1
\end{array}\right]
&g(\delta_{y, \infty})
 &= 
 \left[
\begin{array}{ccc}
1&0&0
\\
0&1&0
\\
\rho - \rho^2&0&1
\end{array}\right]
\end{align*}
and
\[
g(\delta_0) = 
 \left[
\begin{array}{ccc}
1&0&0
\\
\rho-1&\rho&0
\\
\rho - 1&\rho -1 &1
\end{array}\right].
\]
These matrices generate the monodromy representation
\[
\Gamma_1 :=\{g(\delta) \in PGL(3, \mathbb{Z}[\rho])\;:\;\delta \in \pi_1(\mathcal{M}_{0,5}, \alpha)\}.
\]

Now consider the matrix 
\[
J_0:=
\left[
\begin{array}{ccc}
0&0&1
\\
0&1&0
\\
1&0&0
\end{array}
\right]
\]
and set 
\begin{equation}
\label{e:FPPconv}
\Gamma=
\{g \in PGL(3, \mathbb{Z}[\rho])\;:\;
^tgJ_0\bar{g} = J_0\}
\end{equation}
Then $\Gamma = PU(2,1;\mathbb{Z}[\rho])$ consists of the elements of the projectivization of the unitary group preserving the Hermitian form $<r,s>:=s^*J_0r$ on $\mathbb{C}^3$, which can be represented by matrices with entries in $\mathbb{Z}[\rho]$. 

Here the analogue of (\ref{e:PSLses}) is 
\begin{equation}
\label{e:PUses}
1\rightarrow \Gamma_1 \rightarrow
PU(2,1;\mathbb{Z}[\rho])\rightarrow S_4 \rightarrow 1.
\end{equation}

Now in \cite{Shiga}, an action of $S_4$ on $(\mathbb{P}^1\backslash\{0,1,\infty\})^2\backslash \Delta$ is given via explicit matrices, induced by the permutation of the points 
\[
\mathfrak{a}_1=[0:0:1], \;
\mathfrak{a}_2 = [0:1:0], 
\;
\mathfrak{a}_3 = [1:0:0] \;
\text{ and }
\;
\mathfrak{a}_4 = [1:1:1]
\]
of $\mathbb{P}^2.$

We remark that the mapping class group $M(0,5)$ (the quotient of the Artin braid group by the sphere and center relations into which the fundamental group of $\mathcal{M}_{0,5}$ embeds - see \S 2.4 in \cite{Buffetal:99}  for the details) coincides with  $\tilde\pi_1\left((\left[ \mathcal{M}_{0,5} /S_5\right], *\right)$, where $\left[ \mathcal{M}_{0,5} /S_5\right]$ denotes the orbifold quotient and $\tilde\pi_1$ is the  orbifold fundamental group. By the same token, an injection of the fundamental group of $\mathcal{M}_{0,5}$ into $\tilde\pi_1\left((\left[ \mathcal{M}_{0,5} /S_4\right], *\right)$ exists. The monodromy representation thereof (say $\mathcal{Q}$) is onto $PU(2,1; \mathbb{Z}[\rho])$ (cf. \cite{Shiga}). Hence we have:

\begin{tikzpicture}
  \matrix (m)
    [
      matrix of math nodes,
      row sep    = 1.5em,
      column sep = 1.5em
    ]
    {
      1              
      & \pi_1((\mathbb{P}^{1}\backslash\{0,1,\infty\})^2\backslash \Delta, (\overrightarrow{01}, \overrightarrow{01})) 
      &
      & \tilde\pi_1\left((\left[ \mathcal{M}_{0,5} /S_4\right], *\right)
      & S_4
      & 1 \\
     1              
      & \Gamma_1      && PU(2,1;\mathbb{Z}[\rho])
      & S_4
      & 1 \\ 
    };
  \path
  (m-1-1) edge [->] (m-1-2) 
  (m-1-2) edge[->] (m-1-4) 
  (m-1-5) edge[->](m-1-6) 
  (m-1-4) edge [->] (m-1-5)
    (m-1-2) edge [->>] node [left] {$\mathcal{P}$} (m-2-2)
    (m-1-4) edge [->>] node [left] {$\mathcal{Q}$} (m-2-4)
          (m-2-1) edge [->] (m-2-2) 
  (m-2-2) edge[->] (m-2-4) 
  (m-2-5) edge[->](m-2-6) 
  (m-2-4) edge [->] (m-2-5)      ;
\end{tikzpicture}

However, our goal is to parallel the $\mathcal{M}_{0,4}$ case as closely as possible so we would like to give a representation of a group of paths on $\mathcal{M}_{0,5}$ {\em itself} onto $PU(2,1; \mathbb{Z}[\rho])$. This will take more work. 

A presentation for $S_4$ is
\[
<s_1,s_2,s_3\;|\; s_1^2 = s_2^2 = s_3^2 = (s_1s_2)^3 = (s_1s_3)^2 = (s_2s_3)^3 = e>
\]
so once and for all set $s_1 = (12),$ $s_2 = (24)$ and $s_3 = (34)$.


 
The $S_4$ action on $\mathcal{M}_{0,5}$ induces an action on the set, say $\mathcal{B}$, of tangential basepoints for $(\mathbb{P}^1\backslash\{0,1,\infty\})^2\backslash \Delta$ at the points $\mathfrak{a}_j,$ $1\leq j \leq 4$, ``at infinity''. At each $\mathfrak{a}_j$ lie four of the sixty tangential basepoints of $\mathcal{M}_{0,5}$ (cf. \cite{Buffetal:99}, \S 1.3.3 (ii)). Consequently some choices are required to make the $S_4$ action on $\mathcal{B}$ explicit. The version of this action we give here 
was chosen in such a way that the action of the transposition $(14)$ reverses the direction of the tangential arrows, since under Shiga's $S_4$ action this transposition induces the automorphism $(x,y) \mapsto (1-x,1-y)$ of $(\mathbb{P}^1\backslash\{0,1,\infty\})^2\backslash\Delta$. 

The 16 tangential basepoints of $\mathcal{B}$ are permuted under the $S_4$ action generated by the following automorphisms (recall that we are fixing real $x_0$ and $y_0$ in $\mathbb{P}^1\backslash\{0,1,\infty\}$ with $1<x_0<y_0$):
\clearpage

\begin{tabular}{|l|c|c|c|}
\hline
Basepoint & $(12)$ & $(24)$ &$(34)$ 
\\
\hline
$(\overrightarrow{01}, \overrightarrow{01})$ & 
$(\overrightarrow{1x_0}, \overrightarrow{\infty y_0})$ & 
$(\overrightarrow{01}, \overrightarrow{0\infty})$ & 
$(\overrightarrow{01}, \overrightarrow{0\infty})$ 
\\
$(\overrightarrow{01}, \overrightarrow{0\infty})$ & 
$(\overrightarrow{10}, \overrightarrow{\infty 0})$ & 
$(\overrightarrow{01}, \overrightarrow{01})$ & 
$(\overrightarrow{01}, \overrightarrow{01})$ 
\\
$(\overrightarrow{0\infty}, \overrightarrow{01})$ & 
$(\overrightarrow{10}, \overrightarrow{\infty y_0})$ & 
$(\overrightarrow{0\infty}, \overrightarrow{0\infty})$& 
$(\overrightarrow{0\infty}, \overrightarrow{0\infty})$  
\\
$(\overrightarrow{0\infty}, \overrightarrow{0\infty})$ & 
$(\overrightarrow{1x_0}, \overrightarrow{\infty 0})$ & 
$(\overrightarrow{0\infty }, \overrightarrow{01})$ & 
$(\overrightarrow{0\infty}, \overrightarrow{01})$ 
\\
$(\overrightarrow{10}, \overrightarrow{10})$ & 
$(\overrightarrow{10}, \overrightarrow{1x_0})$ & 
$(\overrightarrow{10}, \overrightarrow{\infty 0})$ & 
$(\overrightarrow{\infty 0}, \overrightarrow{10})$ 
\\
$(\overrightarrow{10}, \overrightarrow{1x_0})$ & 
$(\overrightarrow{10}, \overrightarrow{10})$ & 
$(\overrightarrow{1x_0}, \overrightarrow{\infty y_0})$ & 
$(\overrightarrow{\infty y_0}, \overrightarrow{1x_0})$ 
\\
$(\overrightarrow{1x_0}, \overrightarrow{1x_0})$ & 
$(\overrightarrow{1x_0}, \overrightarrow{10})$ & 
$(\overrightarrow{10}, \overrightarrow{\infty y_0})$ & 
$(\overrightarrow{\infty y_0}, \overrightarrow{10})$ 
\\
$(\overrightarrow{1x_0}, \overrightarrow{10})$ & 
$(\overrightarrow{1x_0}, \overrightarrow{1x_0})$ & 
$(\overrightarrow{1x_0}, \overrightarrow{\infty 0})$ & 
$(\overrightarrow{\infty 0}, \overrightarrow{1x_0})$ 
\\
$(\overrightarrow{10}, \overrightarrow{\infty 0})$ & 
$(\overrightarrow{01}, \overrightarrow{0\infty})$ & 
$(\overrightarrow{10}, \overrightarrow{10})$ & 
$(\overrightarrow{1x_0}, \overrightarrow{\infty y_0})$ 
\\
$(\overrightarrow{10}, \overrightarrow{\infty y_0})$ & 
$(\overrightarrow{0\infty}, \overrightarrow{01})$ & 
$(\overrightarrow{1x_0}, \overrightarrow{1x_0})$ & 
$(\overrightarrow{1x_0}, \overrightarrow{\infty 0})$ 
\\
$(\overrightarrow{1x_0}, \overrightarrow{\infty y_0})$ & 
$(\overrightarrow{01}, \overrightarrow{01})$ & 
$(\overrightarrow{10}, \overrightarrow{1x_0})$ & 
$(\overrightarrow{10}, \overrightarrow{\infty 0})$ 
\\
$(\overrightarrow{1x_0}, \overrightarrow{\infty 0})$ & 
$(\overrightarrow{0\infty}, \overrightarrow{0\infty})$ & 
$(\overrightarrow{1x_0}, \overrightarrow{10})$ & 
$(\overrightarrow{10}, \overrightarrow{\infty y_0})$ 
\\
$(\overrightarrow{\infty 0}, \overrightarrow{10})$ & 
$(\overrightarrow{\infty y_0}, \overrightarrow{1x_0})$ & 
$(\overrightarrow{\infty y_0}, \overrightarrow{1x_0})$ & 
$(\overrightarrow{10}, \overrightarrow{10})$ 
\\
$(\overrightarrow{\infty 0}, \overrightarrow{1x_0})$ & 
$(\overrightarrow{\infty y_0}, \overrightarrow{10})$ & 
$(\overrightarrow{\infty y_0}, \overrightarrow{10})$ & 
$(\overrightarrow{1x_0}, \overrightarrow{10})$ 
\\
$(\overrightarrow{\infty y_0}, \overrightarrow{1x_0})$ & 
$(\overrightarrow{\infty 0}, \overrightarrow{10})$ & 
$(\overrightarrow{\infty 0}, \overrightarrow{10})$ & 
$(\overrightarrow{10}, \overrightarrow{1x_0})$ 
\\
$(\overrightarrow{\infty y_0}, \overrightarrow{10})$ & 
$(\overrightarrow{\infty 0}, \overrightarrow{1x_0})$ & 
$(\overrightarrow{\infty 0}, \overrightarrow{1 x_0})$ & 
$(\overrightarrow{1x_0}, \overrightarrow{1x_0})$ 
\\
\hline
\end{tabular}

Using this action it is now possible to identify a group of homotopy classes of paths in the fundamental groupoid $\Pi_1((\mathbb{P}^1\backslash\{0,1,\infty\} )^2\backslash \Delta; \mathcal{B})$, (which consists of all homotopy classes of paths emanating from any of the tangential basepoints in $\mathcal{B}$ which also end in such a point), which surjects onto $PU(2,1; \mathbb{Z}[\rho])$. 

We begin by introducing useful 
notation: 
reminiscent of the notation $s$ and $t$ in \S \ref{s:1.1}, let  $s_x$ denote any tangential path along the real axis from a given tangential basepoint $(\overrightarrow{ab}, \overrightarrow{cd})$ to  $(\overrightarrow{ba}, \overrightarrow{cd})$, unless $\overrightarrow{ab} = \overrightarrow{\infty y_0}$ in which case $s_x$ runs to $\overrightarrow{x_0 y_0}$. Here $a,b,c,d$ are among $\{0,1,\infty, x_0,y_0\}$ but the first symbol in the $x$-coordinate is never $y_0$ and the first symbol in the $y$-coordinate is never $x_0$; and when the first symbol in the $x$-coordinate is $x_0$ the first symbol in the $y$-coordinate is not $y_0$.   
Similarly denote by $s_y$ the analogue of $s_x$ when the $x$ and $y$ coordinates are switched. Also, let $t_x$ be a loop tangential to the real line, from $(\overrightarrow{ab}, \overrightarrow{cd})$ to  $(\overrightarrow{ae}, \overrightarrow{cd})$ where $b\neq e$, which lies in the upper half plane (in the $x-$coordinate) when $\overrightarrow{ab}$ points in the positive direction along the real axis and $\overrightarrow{ae}$ points in the negative direction; and lies in the lower half plane otherwise. Again let $t_y$ denote the analogous loop when the $x$ and $y$ coordinates are switched.  Of course, each of $s_x, s_y, t_x$ and $t_y$ is an abuse of notation since each represents eight different paths at once, depending on the point at which the path is based. 

Now let $\sigma \in S_4.$ With a similar abuse of notation, let $r_{\sigma}$ denote the path (in fact a family of paths) from any given tangential basepoint in $\mathcal{B}$ to its image basepoint under the action of $\sigma$ determined by the $S_4$ action on $\mathcal{B}$ given above, which consists of the {\em minimal} number of concatenated paths from among $s_x, s_y, t_x$ and $t_y$, unless $r_{\sigma} = (s_x,s_y)$ in which case it should be replaced by $(t_x^2 \circ s_x, s_y)$. For example, viewed as a path emanating from $(\overrightarrow{01},\overrightarrow{01})$, $r_{(12)}$ is the path to $(\overrightarrow{1x_0},\overrightarrow{\infty y_0})$ which in the $x$-coordinate is the concatenation of $s_x$ (from $\overrightarrow{01}$ to $\overrightarrow{10}$) followed by $t_x$ (here in the lower half plane) and in the $y$-coordinate is $t_y$ (in the upper half plane) followed by $s_y$ from $\overrightarrow{0\infty}$ to $\overrightarrow{\infty 0}$ and then $t_y$ (again in the upper half plane). This path is pictured below: 

\begin{center}
\begin{tikzpicture}
\filldraw (0,2.2) node [above=1pt] {0} circle (2pt);
\filldraw (2,2.2) node [above=1pt] {1} circle (2pt);
\filldraw (-3,2.2) node[above=1pt] {$\infty$} circle (2pt);
\filldraw (4,2.2) node [above=1pt] {$x_0$} circle (2pt);
\filldraw (6,2.2) node [above=1pt] {$y_0$} circle (2pt);
\draw (0,2.2) --node [above = 1pt] {$s_x$} node {$>$}  (2,2.2);
\draw (2,2.2) .. controls (3, 2.2) and (3.25,1.95) .. (3,1.7);
\draw (3,1.7) .. controls (2.75,1.45) and (1.25,1.45) .. node {$>$} node [below = 1pt] {$t_x$} (1,1.7);
\draw (1,1.7) .. controls (.75,1.95) and (1,2.2) .. (2,2.2);
\draw (0,0) .. controls (1, 0) and (1.25,.25) .. (1,.5);
\draw (1,.5) .. controls (.75,.75) and (-.75,.75) .. node {$<$} node [above = 1pt] {$t_y$} (-1,.5);
\draw (-1,.5) .. controls (-1.25,0.25) and (-1,0) .. (0,0);
\draw (-3,0) .. controls (-2, 0) and (-1.75,.25) .. (-2,.5);
\draw (-2,.5) .. controls (-2.25,.75) and (-3.75,.75) .. node {$<$} node [above = 1pt] {$t_y$} (-4,.5);
\draw (-4,.5) .. controls (-4.25,0.25) and (-4,0) .. (-3,0);
\draw (0,0) --node [below = 1pt] {$s_y$} node {$<$}  (-3,0);
\filldraw (0,0) node [below=1pt] {0} circle (2pt);
\filldraw (2,0) node [below=1pt] {1} circle (2pt);
\filldraw (-3,0) node[below=1pt] {$\infty$} circle (2pt);
\filldraw (4,0) node [below=1pt] {$x_0$} circle (2pt);
\filldraw (6,0) node [below=1pt] {$y_0$} circle (2pt);
\end{tikzpicture}
\end{center}

On the other hand, based at the tangential basepoint $(\overrightarrow{1 0}, \overrightarrow{\infty 0})$, $r_{(12)}$ would be comprised of the paths $s_x$ in the $x$- and $s_y$ in the $y$-coordinates respectively, since $(12)$ maps this basepoint to $( \overrightarrow{01}, \overrightarrow{0\infty})$, which is the exceptional case in which we replace the $x$-coordinate by $t_x^2 \circ s_x$ (a technical requirement in order to ensure that we end up with a group operation when we later compose paths in a specific way).

For a given $r_{\sigma}$ let $r_{\sigma}^{-1}$ denote the path which agrees with $r_{\sigma^{-1}}$ except that every $t_x$ and every $t_y$ appearing in $r_{\sigma^{-1}}$ is replaced by its complex conjugate path. This is exactly the path which runs in the opposite direction to $r_{\sigma}$, from suitable tangential basepoints so that the concatenation of the two paths is the trivial path. Notice that $r_{\sigma}^{-1}$ is not necessarily the same as any $r_{\tau}$ for any $\tau \in S_4.$ 

Now we can define a multiplication of paths of the form of $r_{\sigma}$ and $r_{\sigma}^{-1}$ where $\sigma \in S_4$, viewed as paths based at some fixed basepoint, in the obvious way. Once and for all fix the basepoint $(\overrightarrow{01}, \overrightarrow{01})$. Any of the $r_{\sigma}$ or $r_{\sigma}^{-1}$ which emanate from this basepoint terminate in some other basepoint of $\mathcal{B}$ and therefore it is possible to concatenate such paths. Let us denote the concatenation by $\odot$. 

Then $r_{\sigma} \odot r_{\sigma}^{-1}$ is the identity path, while $r_{\sigma} \odot r_{\tau}$ in general does not equal $r_{\tau \circ \sigma}$. For example, $r_{(12)} \neq r_{(12)}^{-1}$. In particular, 
$r_{(12)} \circ r_{(12)}$ is a loop about 1 in the $x$-component and a loop about $\infty$ in the $y$-component, both oriented positively, when based at $(\overrightarrow{01},\overrightarrow{01})$. 
By construction, the concatenation of paths does however respect the underlying $S_4$-action on $\mathcal{B},$ with $r_{\sigma}^{-1}$ mapping between the same basepoints as does $r_{\sigma^{-1}}$, and $r_{\tau\circ \sigma}$ running between the same basepoints as $r_{\sigma}\odot r_{\tau}$. 

Denote by $\mathcal{G}_{(01,01)}^{(4)}$ 
the group of paths which begin at $(\overrightarrow{01}, \overrightarrow{01})$, and which are generated as described above by the $\odot$ product of $r_{\sigma}$ and $r_{\sigma}^{-1}$ for $\sigma \in \{(12), (24), (23)\}$. We will show that the surjective homomorphism of $\mathcal{G}_{(01,01)}^{(4)}$ onto $S_4$ implicit in the construction factors through $PU(2,1; \mathbb{Z}[\rho]).$ 

In order to achieve this, we require a presentation for $PU(2, 1; \mathbb{Z}[\rho])$ given in \cite{FalbelParkerPicard}, which it is convenient to express in terms of explicit matrices which we derive from the Shiga generators of the monodromy representation listed above. 

First, as in \cite{FalbelParkerPicard} we introduce the involution
\[
R = \left[
\begin{array}{ccc}
0 & 0 & 1
\\
0 & -1 & 0
\\
1 & 0 & 0
\end{array}
\right]
\]
which will play an important role in what follows.

The following group elements will also be distinguished:
\begin{align*}
A_1: &= \left[
\begin{array}{ccc}
1 & 0 & 0
\\
0 & -\rho^2 & 0
\\
0 & 0 & 1
\end{array}
\right]
\qquad &
A_y: & = \left[
\begin{array}{ccc}
1 & 1 & \rho
\\
0 & -\rho^2 & \rho^2
\\
 0 & 0& 1
\end{array}
\right]
\\
A_0: &= \left[
\begin{array}{ccc}
1 & 0 & 0
\\
\rho & -\rho^2 & 0
\\
\rho & \rho & 1
\end{array}
\right]
\qquad &
A_{y,0}: & = \left[
\begin{array}{ccc}
0 & 0 & -\rho^2 - 1
\\
0 & 1 & 0
\\
-1-\rho^2 & 0 & 1-\rho^2
\end{array}
\right].
\end{align*}
These matrices were chosen because they satisfy $A_k^2 = g(\delta_k)$ for $k = 0,1,y$ and $\{y,0\}.$ Along with $R$, the $A_k$ generate $PU(2,1;\mathbb{Z}[\rho]).$ (In fact $R$, $A_0$ and $A_1$ suffice as generators.) 

The matrices $R_1 = A_1,$ $R_2 = R^{-1}A_yR$ and $R_3 = R^{-1}A_0R$ (where $R^{-1}=R$ and $[A_1,R]=I$) agree with generators for $PU(2,1;\mathbb{Z}[\rho])$ given in \cite{FalbelParkerPicard} (with the same $R_j$ notation), where the presentation
\begin{multline*}
PU(2, 1;\mathbb{Z}[\rho]) \simeq
<R_1,R_2,R_3\;|\;R_1^6 =R_2^6 = R_3^6= (R_3R_2R_1)^4 =I; 
\\
R_jR_{j+1}R_j = R_{j+1}R_jR_{j+1} \;{\text{ for }}\; j \in \{1,2,3\} \!\!\mod 3;  \\
R_1R_2R_3R_1 = R_3R_1R_2R_3>
\end{multline*}
is shown to hold.

\begin{lemma}
The mapping 
\label{l:1}
\[
\Upsilon : PU(2,1; \mathbb{Z}[\rho]) \rightarrow S_4
\]
defined by
\begin{align*}
R_1 & \mapsto s_1 & \leftrightarrow (12)
\\
R_2 & \mapsto s_2 & \leftrightarrow (24)
\\
R_3 & \mapsto s_2^{-1}s_3s_2 & \leftrightarrow (23)
\end{align*}
is a surjective homomorphism.
\end{lemma}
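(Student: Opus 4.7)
\emph{Proof proposal.} The strategy is to invoke the universal property of the Falbel--Parker presentation of $PU(2,1;\mathbb{Z}[\rho])$ displayed just before the statement of the lemma: it suffices to verify that the three proposed images
\[
\Upsilon(R_1)=(12),\qquad \Upsilon(R_2)=(24),\qquad \Upsilon(R_3)=s_2^{-1}s_3s_2=(23)
\]
satisfy each defining relation in $S_4$, after which $\Upsilon$ extends uniquely to a group homomorphism.

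The sixth-power relations $R_j^6=I$ are automatic because each image is an involution. The three braid identities $R_jR_{j+1}R_j=R_{j+1}R_jR_{j+1}$ (indices mod $3$) each reduce to the standard fact that two transpositions in $S_4$ sharing exactly one index $b$, say $(a\;b)$ and $(b\;c)$, satisfy $(a\;b)(b\;c)(a\;b)=(a\;c)=(b\;c)(a\;b)(b\;c)$; here each of the cyclically consecutive pairs $\{(12),(24)\}$, $\{(24),(23)\}$, $\{(23),(12)\}$ shares precisely the index $2$. This is exactly why $\Upsilon(R_3)$ is taken to be the conjugate $s_2^{-1}s_3s_2=(23)$ rather than $s_3=(34)$: the latter would commute with $(12)$ and the $j=3$ braid relation would fail. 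The remaining two relations, $(R_3R_2R_1)^4=I$ and the length-four identity $R_1R_2R_3R_1=R_3R_1R_2R_3$, require explicit calculation in $S_4$; a short check shows that $(23)(24)(12)$ is the four-cycle $(1\;4\;3\;2)$, whose fourth power is trivial, while both sides of the length-four identity evaluate to the same three-cycle $(1\;3\;4)$.

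For surjectivity, it suffices to observe that each of the generators $s_1,s_2,s_3$ of $S_4$ lies in the image: $\Upsilon(R_1)=s_1$ and $\Upsilon(R_2)=s_2$ by definition, and
\[
\Upsilon(R_2R_3R_2^{-1}) = s_2(s_2^{-1}s_3s_2)s_2^{-1} = s_3.
\]

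The only genuinely computational step is the verification of the length-four relation $R_1R_2R_3R_1=R_3R_1R_2R_3$, but this is a routine several-line permutation calculation in $S_4$ and presents no conceptual obstacle; every other relation follows immediately from transparent properties of transpositions.
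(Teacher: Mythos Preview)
Your proof is correct and follows essentially the same approach as the paper's own proof, which simply says ``one checks that the relations on the $R_j$ are compatible with those on the $s_k$'' and that surjectivity follows because the images generate $S_4$. You have just made both of these checks explicit, including the helpful observation about why $(23)$ rather than $s_3=(34)$ is needed for the cyclic braid relation with $R_1$.
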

{\em Proof of Lemma:} One checks that the relations on the $R_j$ are compatible with those on the $s_k$. The surjectivity follows since the images $s_1,s_2$ and $s_2^{-1}s_3s_2$ form a generating set for $S_4.$
\hfill $\Box$

Because $R = [(R_1R_3R_3)^{-2}R_1R_2R_1^{-1}(R_1R_2R_3)^{-2}R_1R_2]^2 = (R_3R_1R_2)^2$ (cf. \cite{FalbelParkerPicard}), one checks that $\Upsilon: R \mapsto (12)(34)$.

Notice that in each of the generators of $\mathcal{G}_{(01,01)}^{(4)}$, in each occurrence, $t_x$ and $t_y$ are positively oriented. This is essentially why we have the
\begin{proposition}
\label{p:free}
$\mathcal{G}_{(01,01)}^{(4)}$ is a free group.
\end{proposition}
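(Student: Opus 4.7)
The plan is to leverage the observation stated immediately before the proposition: in each of the three generators $r_{(12)}, r_{(24)}, r_{(23)}$, every occurrence of a tangential loop $t_x$ or $t_y$ is positively oriented, while by construction the inverses $r_\sigma^{-1}$ contain only the complex-conjugate (negatively oriented) versions of those loops. Showing that $\mathcal{G}_{(01,01)}^{(4)}$ is free on three generators is then equivalent to showing that the natural map
\[
\Phi: \mathcal{G}_{(01,01)}^{(4)} \longrightarrow \Pi_1((\mathbb{P}^1\setminus\{0,1,\infty\})^2\setminus\Delta; \mathcal{B})
\]
injects no nontrivial reduced word $w = r_{\sigma_1}^{\epsilon_1} \odot \cdots \odot r_{\sigma_n}^{\epsilon_n}$ (with no $r_\sigma^{\pm 1}\odot r_\sigma^{\mp 1}$ cancellations) to the identity homotopy class. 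That $\Phi$ is a well-defined map of groupoid-elements is immediate from the way $\odot$ was defined to respect the $S_4$-action on $\mathcal{B}$.

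To rule out such a relation, I would attach to every element of $\Pi_1$ its collection of winding numbers around each of the punctures of $\mathcal{M}_{0,5}$, namely the coordinate ``hyperplanes'' $x = 0,1,\infty, y_0$ and $y = 0,1,\infty, x_0$ together with the diagonal $\Delta$. The positive orientation hypothesis translates into the statement that each generator $r_\sigma$ contributes only non-negative winding contributions about each puncture, and $r_\sigma^{-1}$ only non-positive ones. Since the $s_x, s_y$ segments themselves contribute no winding (they are pieces of the real axis between punctures), and the tangential basepoints in $\mathcal{B}$ track where a given $t_x$ or $t_y$ is centered, one can read off from the explicit description of each $r_\sigma$ exactly which punctures it encircles and with what (positive) multiplicity. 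A reduced word then produces a total winding which is a $\mathbb{Z}$-linear combination of these non-negative contributions, with strictly positive coefficients on the ``first uncancelled'' generator of each type; this total cannot vanish identically unless $w$ is already empty, contradicting the existence of a relation.

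The main obstacle will be the combinatorial bookkeeping: when $r_\sigma^\epsilon$ is followed by $r_\tau^\eta$ with $(\sigma,\epsilon)\neq (\tau,-\eta)$, I must check, case by case across the finite list of basepoints in $\mathcal{B}$ and the three $\sigma$'s, that no cancellation of a positively-oriented $t$-loop against a negatively-oriented one around the same puncture can be engineered by clever choice of intermediate basepoint. Because each $r_\sigma$ was defined to use the \emph{minimal} number of $s, t$-segments (with the one exceptional replacement of $(s_x,s_y)$ by $(t_x^2\circ s_x,s_y)$ noted in the text), and because the $s$-segments only translate tangential arrows along real intervals between adjacent punctures, the reduction to a finite combinatorial check should be tractable. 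Once that check is carried out, freeness on the three stated generators follows.
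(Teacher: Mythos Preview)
Your winding-number strategy has a genuine gap. Winding numbers around the punctures of $\mathcal{M}_{0,5}$ factor through the abelianisation $H_1(\mathcal{M}_{0,5};\mathbb{Z})$, so any commutator of loops has total winding zero. Concretely, take the reduced word
\[
w \;=\; r_{(12)} \odot r_{(12)} \odot r_{(24)} \odot r_{(24)} \odot r_{(12)}^{-1} \odot r_{(12)}^{-1} \odot r_{(24)}^{-1} \odot r_{(24)}^{-1}.
\]
Each of $r_{(12)}^2$ and $r_{(24)}^2$ lies in the kernel of $\mathcal{G}_{(01,01)}^{(4)}\to S_4$, so each of the four blocks $r_{(12)}^{\pm 2}$, $r_{(24)}^{\pm 2}$ is a genuine loop based at $(\overrightarrow{01},\overrightarrow{01})$; the basepoint does not drift between blocks, and the winding contributions of $r_{(12)}^{2}$ and $r_{(12)}^{-2}$ cancel exactly, as do those of $r_{(24)}^{2}$ and $r_{(24)}^{-2}$. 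Thus $w$ is a nonempty reduced word whose total winding vector is zero, yet if $\mathcal{G}_{(01,01)}^{(4)}$ is to be free on three generators then $w$ must be nontrivial. Your assertion that ``this total cannot vanish identically unless $w$ is already empty'' is therefore false, and the case-check you outline on \emph{adjacent} letters cannot repair it, because the cancellation in this example occurs between non-adjacent blocks. An abelian invariant simply cannot separate a free group from its commutator quotient.

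For comparison, the paper's argument does not use winding numbers at all. It instead pushes the hypothetical relation down to $S_4$ via the built-in surjection, notes that any relation among $(12),(24),(23)$ in $S_4$ is assembled from the Coxeter relations $s_i^2=e$ and $(s_is_j)^3=e$, and then verifies by hand that the corresponding words $r_\sigma^2$ and $(r_\sigma\odot r_\tau)^3$ are visibly nontrivial paths in $\mathcal{M}_{0,5}$ (e.g.\ $(r_{(23)}\odot r_{(24)})^3=(1,t_y^6)$). If you want to complete a proof along the paper's lines, that is the structure to emulate rather than an abelianisation argument.
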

\begin{proof} Suppose instead that a relation $r_{(a_1b_1)} \odot \cdots \odot r_{(a_k b_k)} = 1$ (with $(a_j b_j) \in \{(12), (23), (24)\}$ for $j=1, \ldots, k$) exists among the generators of $\mathcal{G}_{(01,01)}^{(4)}$. Then by construction also
\[
(a_1b_1) \cdots (a_k b_k) = e
\]
in $S_4.$ Now we know that the relations among the transpositions $(12)$, $(23)$ and $(24)$ are generated by $(12)^2 = (23)^2 = (24)^2 = e$ and 
$((12)(23))^3 = ((12)(24))^3 = ((23)(24))^3 = e.$ One checks easily that none of the corresponding products in the $r_{(a_j b_j)}$ give the identity path in $\mathcal{G}_{(10,\infty0)}^{(4)}$. Indeed, each of $r_{(12)}$, $r_{(24)}$ and $r_{(23)}$ has infinite order; and of the other relevant paths to consider, 
the simplest is 
\[
(r_{(23)}\odot r_{(24)}) = (1, t_y^6)
\]
which is also of infinite order.
\end{proof}

\begin{corollary}
The mapping 
\[
\mathcal{T}: \mathcal{G}_{(01, 01)}^{(4)} \rightarrow PU(2,1;\mathbb{Z}[\rho])
\]
given by 
\begin{align*}
r_{(12)} &\mapsto R_1
\\
r_{(24)} & \mapsto R_2
\\
r_{(23)} & \mapsto R_3
\end{align*}
is a surjective homomorphism.
\end{corollary}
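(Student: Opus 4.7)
The plan is to leverage the proposition just proved: since $\mathcal{G}_{(01,01)}^{(4)}$ has been shown to be free, and since the definition of the group singles out $r_{(12)}$, $r_{(24)}$, $r_{(23)}$ (together with their $\odot$-inverses) as its generators, the group is in fact freely generated by these three paths. This immediately handles the well-definedness of $\mathcal{T}$: by the universal property of free groups, there is a unique group homomorphism from $\mathcal{G}_{(01,01)}^{(4)}$ into $PU(2,1;\mathbb{Z}[\rho])$ sending the three generators to the prescribed images $R_1,R_2,R_3$. No relations need to be checked on the domain side, because there are none beyond those forced by the group axioms.

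For surjectivity, I would appeal to the Falbel--Parker presentation recalled earlier in \S\ref{s:2.1}, according to which the three elements $R_1,R_2,R_3$ generate $PU(2,1;\mathbb{Z}[\rho])$. Since the image of $\mathcal{T}$ contains a generating set, $\mathcal{T}$ is surjective. In particular the Falbel--Parker identity $R=(R_3R_1R_2)^2$ ensures that the distinguished involution $R$ lies in the image, so every element produced from the five matrices $R$, $A_0$, $A_1$, $A_y$, $A_{y,0}$ introduced above is hit.

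As a sanity check, I would verify compatibility with the $S_4$-quotients on both sides: the composite $\Upsilon\circ \mathcal{T}$ sends $r_{(\sigma)}$ to the transposition $\sigma$ (after the identification $s_2^{-1}s_3s_2 \leftrightarrow (23)$ built into Lemma \ref{l:1}), so it coincides with the natural projection $\mathcal{G}_{(01,01)}^{(4)} \twoheadrightarrow S_4$ implicit in the construction of the path group. This confirms that $\mathcal{T}$ fits into the commutative diagram suggested by the short exact sequence (\ref{e:PUses}), and in particular that $\mathcal{T}$ descends the $S_4$-action correctly.

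The main (and essentially only) obstacle is the freeness of $\mathcal{G}_{(01,01)}^{(4)}$, which has already been dispatched by Proposition \ref{p:free}; once that is in hand, the corollary is a one-line consequence of the universal property of free groups together with the fact that $R_1,R_2,R_3$ generate $PU(2,1;\mathbb{Z}[\rho])$.
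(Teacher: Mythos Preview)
Your proof is correct and follows exactly the paper's approach: freeness of $\mathcal{G}_{(01,01)}^{(4)}$ (Proposition~\ref{p:free}) gives the homomorphism via the universal property, and the Falbel--Parker fact that $R_1,R_2,R_3$ generate $PU(2,1;\mathbb{Z}[\rho])$ gives surjectivity. The $S_4$-compatibility check you add is a pleasant extra but not required for the corollary itself.
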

\begin{proof}
Since $\mathcal{G}_{(01,01)}^{(4)}$ is a free group the map induced by generators is a homomorphism; and it is surjective since $R_1, R_2$ and $R_3$ generate $PU(2, 1; \mathbb{Z}[\rho]).$ 
\end{proof}

Denote the kernel of the surjection of  $\mathcal{G}_{(01,01)}^{(4)}$ onto $S_4$ by $\Gamma_{(01,01)}^{(4)}.$ Notice that it consists of those elements of the fundamental group $\pi_1((\mathbb{P}^1\backslash\{0,1,\infty\})^2\backslash \Delta, (\overrightarrow{01}, \overrightarrow{01}))$ which lie in $\mathcal{G}_{(01,01)}^{(4)}.$

Recall that $\Gamma_1$ denotes the monodromy representation for $\mathcal{M}_{0,5}$, for which Shiga's matrix generators were recorded above. Then it again follows immediately that we have the 
\begin{corollary}
The mapping 
\[
\mathcal{U}: \Gamma_{(01,01)}^{(4)} \rightarrow \Gamma_1
\]
induced by $\mathcal{T}$ is a surjective homomorphism.
\end{corollary}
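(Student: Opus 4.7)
The plan is to execute a straightforward diagram chase on the two short exact sequences lined up by the construction, with $\mathcal{T}$ serving as the surjective middle vertical arrow.

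First I would record that the two sequences
\[
1 \to \Gamma_{(01,01)}^{(4)} \to \mathcal{G}_{(01,01)}^{(4)} \xrightarrow{\pi} S_4 \to 1
\]
and
\[
1 \to \Gamma_1 \to PU(2,1;\mathbb{Z}[\rho]) \xrightarrow{\Upsilon} S_4 \to 1
\]
have identical quotient $S_4$, and that $\Upsilon \circ \mathcal{T} = \pi$. The last identity holds by construction: on the three generators $r_{(12)}, r_{(24)}, r_{(23)}$ of $\mathcal{G}_{(01,01)}^{(4)}$, the map $\pi$ sends them to the transpositions $(12), (24), (23)$ respectively, while $\Upsilon \circ \mathcal{T}$ sends them via $R_1, R_2, R_3$ to $s_1, s_2, s_2^{-1}s_3s_2$, which equal the same transpositions by Lemma \ref{l:1}. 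Since the two homomorphisms from $\mathcal{G}_{(01,01)}^{(4)}$ to $S_4$ agree on generators, they agree globally.

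Next I would verify that $\mathcal{U}$ is well defined as a restriction of $\mathcal{T}$. If $\gamma \in \Gamma_{(01,01)}^{(4)}$, then $\pi(\gamma) = e$, hence $\Upsilon(\mathcal{T}(\gamma)) = \pi(\gamma) = e$, so $\mathcal{T}(\gamma) \in \ker(\Upsilon) = \Gamma_1$. The homomorphism property is then inherited from $\mathcal{T}$.

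Finally, for surjectivity, I would take an arbitrary $g \in \Gamma_1 \subset PU(2,1;\mathbb{Z}[\rho])$ and use surjectivity of $\mathcal{T}$ to pick a preimage $\gamma \in \mathcal{G}_{(01,01)}^{(4)}$ with $\mathcal{T}(\gamma) = g$. Then $\pi(\gamma) = \Upsilon(\mathcal{T}(\gamma)) = \Upsilon(g) = e$, so $\gamma$ in fact lies in $\Gamma_{(01,01)}^{(4)}$, and $\mathcal{U}(\gamma) = g$. No step of this is expected to be hard; the only point requiring any care is the compatibility $\Upsilon \circ \mathcal{T} = \pi$, which is exactly why the earlier identifications of the three transpositions via $s_2^{-1}s_3s_2 = (23)$ were arranged the way they were.
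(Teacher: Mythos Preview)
Your argument is correct and is essentially what the paper has in mind: the paper treats this corollary as immediate from the surjectivity of $\mathcal{T}$ and the preceding constructions, without spelling out the diagram chase. Your proof simply makes explicit the compatibility $\Upsilon \circ \mathcal{T} = \pi$ and the resulting restriction to kernels, which is exactly the content of ``follows immediately'' in the paper.
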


$\mathcal{U}$ can also be given very explicitly since one can check that (the isomorphic copy of) the monodromy representation $\Gamma_1$ which embeds into $PU(2,1;\mathbb{Z}[\rho])$ as given by the above presentation is generated by $R_1^2 = g(\delta_1), R_2^2 = R^{-1}g(\delta_y)R,R_3^2 =R^{-1}g(\delta_0)R$, $(R_1R_2)^3 = g(\delta_{y, \infty})$, $(R_1R_2R_3R_2^{-1})^2 = R^{-1}g(\delta_1)g(\delta_{y,0})R$
and $(R_2^2R_3R_2^{-1})^3 = R^{-1}g(\delta_0)g(\delta_{y,0})g(\delta_y)R.$

\subsection{Picard period polynomials} 
\label{s:2.2} In order to define Picard period polynomials by analogy with the usual (elliptic) period polynomials discussed in \S \ref{s:1.2}, we first assemble the relevant constructions and definitions.

Following \cite{Shiga}, define a Picard modular form (respectively a Picard meromorphic modular form) of weight $k$ relative to a subgroup $G$ of $PU(2,1;\mathbb{Z}[\rho])$ to be any holomorphic (respectively meromorphic) function $f(z_1,z_2)$ on the two dimensional complex ball $\mathbb{H}^2_{\mathbb{C}}$ which satisfies
\begin{equation}
\label{e:pmf}
f(g(z_1,z_2))
= (\det g)^{-k}
(g_{20}z_1+g_{21}z_2 +g_{22})^{3k}f(z_1,z_1)
\end{equation}
for any $g = (g_{ij})_{i,j = 0,1,2} \in G$. 
Here $g(z_1,z_2)$ is defined via the action of $g$ on $[\eta_0:\eta_1:\eta_2] \in \mathbb{P}^2(\mathbb{C})$ with the identifications $z_1 = \frac{\eta_0}{\eta_2}$ and $z_2 = \frac{\eta_1}{\eta_2}.$ As Shiga points out, one checks that 
\begin{equation}
\label{e:pj}
\frac{\partial(g(z_1,z_2))}{\partial(z_1,z_2)}
=
\frac{\det g}{(g_{20}z_1 + g_{21}z_2 + g_{22})^{3}} =:j_g(z_1,z_2)
.
\end{equation} 
In other words, (\ref{e:pmf}) is of the form 
\begin{equation}
\label{e:Jd}
f(z_1,z_2) = j_g^{k}(z_1,z_2) f(g(z_1,z_2))
\end{equation}
where $j_g$ denotes the Jacobian determinant given by (\ref{e:pj}). Note that the equations given here adhere to the conventions of \cite{FalbelParkerPicard} for the $PU(2,1;\mathbb{Z}[\rho])$ action on $\mathbb{H}^2_{\mathbb{C}}$ - i.e. we identify $PU(2,1;\mathbb{Z}[\rho])$ with $\Gamma$ in (\ref{e:FPPconv}) and label the coordinates on $\mathbb{H}_{\mathbb{C}}^2$ according to the ordering given in (\ref{e:ball}).

Next, we remark that the (left) action induced by the transpose of the inverse $^tg^{-1}$, multiplied by a cubed root of $\det g$, turns out to give a suitable action of $g= (g_{ij})_{i,j = 0,1,2} \in PU(2,1;\mathbb{Z}[\rho])$ on the polynomial ring $\mathbb{Z}[\rho][X_0,X_1,X_2]_{3n}$ of homogeneous polynomials in the variables $X_0, X_1$ and $X_2$ of degree $3n$ for some fixed $n\geq 1$ with coefficients in $\mathbb{Z}[\rho]$. Given a general element $A = (a_{ij})_{i,j  = 0,1,2}$ of $PU(2,1; \mathbb{Z}[\rho])$ the inverse is 
\[
A^{-1}
 = 
 \left[
 \begin{array}{ccc}
 \overline{a}_{22} & \overline{a}_{12} & \overline{a}_{02}
 \\
\overline{ a}_{21} & \overline{a}_{11} & \overline{a}_{01}
 \\
\overline{ a}_{20} & \overline{a}_{10} & \overline{a}_{00}
 \end{array}
 \right]
\]
-i.e. $A^{-1} = (b_{ij})_{i,j =0,1,2}$ where $b_{ij} = \overline{a}_{(2-j)(2-i)}$. Hence this action is given by:
\begin{equation}
\label{e:pa}
X_k \mapsto (\det g)^{1/3}\sum_{j=0}^2 \overline{g}_{(2-k)(2-j)}X_j
\end{equation}
for $k=0,1,2.$ Here $\det g$ is some sixth root of unity for any $g \in PU(2, 1;\mathbb{Z}[\rho])$ but notice that since we are working only with the action on homogeneous polynomials of degree divisible by 3, the choice of the cubed root is irrelevant. Also, since the sixth roots of unity lie in $\mathbb{Z}[\rho]$ we don't need to enlarge the ring of coefficients.

Suppose that $f(z_1,z_2)$ denotes a Picard modular form of weight $k$ relative to a subgroup $G$ of $PU(2,1;\mathbb{Z}[\rho]).$ By combining (\ref{e:pmf}), (\ref{e:pj}) and (\ref{e:pa}) one checks that 
\[
\underline{f}(z_1,z_2; X_0,X_1,X_2)dz_1\wedge dz_2:=
f(z_1,z_2)(z_1X_0+z_2X_1+X_2)^{3k-3}dz_1\wedge dz_2
\]
is invariant under the action of $G$. 

To obtain Picard modular forms for the full group $PU(2,1;\mathbb{Z}[\rho])$ and not merely subgroups thereof, we need to work a little harder. In \cite{RungePicardMod} Runge used a method he developed to determine rings of Siegel modular forms, to find a presentation of the ring of Picard modular forms for the full group $PU(2, 1; \mathbb{Z}[\rho])$, generalizing earlier work of Holzapfel \cite{Holzapfel} and Shiga \cite{Shiga}. In doing so, he introduced the notion of ``Picard type'': let $M$ denote an element of finite order in $Sp(3, \mathbb{Z})$ and let $\mathfrak{H}(M)$ be the connected complex submanifold of the Siegel upper half space $\mathfrak{H}_3$ consisting of those points which are fixed by the action of $M$. If $\Gamma_M$ denotes the centralizer of $M$ in $Sp(3, \mathbb{Z})$, then he shows that $\Gamma_M\backslash \mathfrak{H}(M)$ is the Picard moduli variety (in the sense that the coarse moduli space of Picard curves is dense therein - see Proposition 4 in \cite{RungePicardMod}), and $M$ is referred to as the {\em Picard type}. A holomorphic function $f$ on $\mathfrak{H}_3$ which for every
\[
\gamma = \left[
\begin{array}{cc}
A_{\gamma} & B_{\gamma}
\\
C_{\gamma} & D_{\gamma}
\end{array}
\right]
\]
in $\Gamma_{M}$ satisfies
\begin{equation}
\label{e:mPt}
f(\tau) = j_{\gamma}^k (\tau)\cdot f(\gamma(\tau))
\end{equation}
(where $j_{\gamma}(\tau)$ denotes the Jacobian determinant $C_{\gamma}\tau + D_{\gamma}$),  is called a Picard modular form of weight $k$ and Picard type $M$. (Here $\gamma$ has the usual action on $\tau \in \mathfrak{H}_3$ - namely $\gamma(\tau) = (A_{\gamma}\tau + B_{\gamma})\cdot (C_{\gamma} \tau+D_{\gamma})^{-1}$.)

Now it is known that the period matrix 
\begin{equation}
\label{e:pm}
\Omega(z_1, z_2)
=
\left[ 
\begin{array}{ccc}
\frac{2\rho^2 z_1 + z_2^2}{1-\rho} & \rho^2 z_2 & \frac{\rho^2 z_1 - \rho z_2^2}{\rho - 1}
\\
\rho^2 z_2 & -\rho^2 & z_2
\\
\frac{\rho^2 z_1 - \rho z_2^2}{\rho-1} & z_2 & \frac{2z_1+z_2^2}{(1-\rho)\rho}
\end{array}
\right]
\end{equation}
gives a mapping, say $\Psi$, from $\mathbb{H}_{\mathbb{C}}^2$ with the action of $PU(2, 1; \mathbb{Z}[\rho])$ into $\mathfrak{H}_3$ with the action of $Sp(3, \mathbb{Z})$.  In \cite{Holzapfel}, Holzapfel gives an explicit correspondence between $PU(2, 1; \mathbb{Z}[\rho])$ and a certain subgroup of $Sp(3, \mathbb{Z})$. This correspondence is defined in such a way that $\Psi$ carries the $PU(2,1; \mathbb{Z}[\rho])$ action to the $Sp(3, \mathbb{Z})$ action. (See Lemma 2.27 in \cite{Holzapfel}). Under these correspondences it then follows that the defining equations of Picard modular forms given by (\ref{e:Jd}) and (\ref{e:mPt}) coincide. 

 In Runge's formalism, when the Picard type is 
\[
M = \left[
\begin{array}{rrrrrr}
0 & 0& 0& -1 & 0 & 0
\\
0 & 0 & 0 & 0 & -1 & 0 
\\
0 & 0 & -1 & 0 & 0 & 1
\\
1 & 0 & 0 & -1 & 0 & 0 
\\
0 & 1 & 0 & 0 & -1 & 0 
\\
0 & 0 & -1 & 0 & 0 & 0 
\end{array}
\right]
\]
it turns out that $\Gamma_M \simeq PU(2,1; \mathbb{Z}[\rho])$. Runge found the ring of Picard modular forms of type $M$ is a polynomial ring in certain homogeneous polynomials in the theta constants
\begin{equation}
\label{e:tc}
f_{\mathbf{k}}(\Omega):= \theta
 \left[
 \begin{array}{c}
\mathbf{k}
\\
\mathbf{0}
\end{array}
\right]  (0, 2\Omega) := \sum_{\mathbf{n} \in \mathbb{Z}^3}
\exp( 2 \pi i\;  {^t(\mathbf{n}+\mathbf{k})}\Omega (\mathbf{n}+\mathbf{k})) 
\end{equation}
where $\Omega \in \mathfrak{H}_3$ and $\mathbf{k}$ is one of 
\[
\mathbf{1} := \left[ \begin{array}{c}
                    1\\0\\0
                    \end{array}\right], \;\;
\mathbf{2} := \left[ \begin{array}{c}
                    0\\1\\0
                    \end{array}\right], \;\;
{\text{ or }}\; \;
\mathbf{3} := \left[\begin{array}{c}
                    1\\1\\0
                    \end{array}\right].
                    \] 
                    See Appendix \ref{a:Runge} for two of the  explicit generating polynomials.  
  
Thanks to the period mapping of (\ref{e:pm}), these functions may be regarded as functions of the ball $\mathbb{H}_{\mathbb{C}}^2$. 
Then if $f(z_1,z_2)$ is a Picard modular form of type $M$ for the entire group $PU(2, 1;\mathbb{Z}[\rho])$, as above
\[
\underline{f}(z_1,z_2; X_0,X_1,X_2)dz_1\wedge dz_2:=
f(z_1,z_2)(z_1X_0+z_2X_1+X_2)^{3k-3}dz_1\wedge dz_2
\]
will be invariant under the action of $PU(2,1;\mathbb{Z}[\rho])$.

Integrating this form over a certain domain $D$ in $\mathbb{H}_{\mathbb{C}}^2$ will give the Picard period polynomials, so we turn next to a description of $D$. 


To begin, we motivate the claim made above that $R$ is an analogue of the matrix $S$ of $PSL(2, \mathbb{Z}).$ As in \cite{FalbelParkerPicard} let 
\[
P = \left[
\begin{array}{ccc}
1 & 1& \rho
\\
0 & \rho & -\rho
\\
0 & 0 & 1
\end{array}
\right]
\]
which is also an element of $PU(2,1; \mathbb{Z}[\rho])$. The subgroup generated by $P$ and $R$ is a non-faithful representation of $PSL(2, \mathbb{Z})$ in $PU(2,1)$, from which $PU(2,1;\mathbb{Z}[\rho])$ may be recovered by adjoining an elliptic element of order 6, for example $R_1$. (See Proposition 5.10 in \cite{FalbelParkerPicard}.) The involution $R$ is the image of the involution $S$ of $PSL(2, \mathbb{Z})$ in this representation. Also following \cite{FalbelParkerPicard}, denote the point $[1:0:0]$ at infinity for $\mathbb{H}_{\mathbb{C}}^2$ by $q_{\infty}$ and define the isometric sphere $\mathcal{S}_R$ of $R$ as follows: 
\[
\mathcal{S}_R:=\{z \in \mathbb{H}_{\mathbb{C}}^2 \;:\;
|<q_{\infty},z>| = |<q_{\infty},Rz>|
\}.
\] 
The action of $R$ on $\mathcal{S}_R$ is most easily understood via geographical coordinates, which are the natural coordinates to emphasize properties of $\mathcal{S}_R$ (and other similarly defined isometric spheres) shown in work of Mostow and Goldman to be satisfied by so-called bisectors, of which these are examples. See \cite{FalbelParkerPicard} for more details. A straightforward calculation shows that on $\mathcal{S}_R$ we can take $z_1 = -e^{i\theta}$ where $\theta \in [-\pi/2, \pi/2].$ It's then convenient to set
\[
z_2 = re^{i\alpha + i\theta/2}.
\]
With this choice, the points of $\mathcal{S}_R$ are those for which $r \in [-\sqrt{2 \cos \theta}, \sqrt{2 \cos \theta}]$ and $\alpha \in [-\pi/2, \pi/2)$.

Now a trivial calculation shows that $R$ maps $\mathcal{S}_R$ to itself by sending the point with geographical coordinates $(r, \theta, \alpha)$ to the point $(r, -\theta, \alpha).$ The points of $\mathcal{S}_R$ for which $\theta = 0$ are therefore fixed by the $R$-action. Denote this set of points by $i_3,$ since it is the analogue of the fixed point $i \in \mathfrak{H}$ of $S$. 

The so-called horospherical coordinates on $\mathbb{H}_{\mathbb{C}}^2$ will also be useful: again as in  \cite{FalbelParkerPicard} let $\mathcal{N}$ denote the Heisenberg group $\mathbb{C}\times \mathbb{R}$ with group law given by
\[
(a_1,t_1)\cdot (a_2,t_2) = (a_1+a_2,\;t_1+t_2 - \Im (a_1\overline{a}_2)).
\]
Then $\mathcal{N}\times \mathbb{R}^{+}$ parameterizes $\mathbb{H}_{\mathbb{C}}^2$ as follows:
\[
(a,t,u)\mapsto \left[\frac{-|a|^2 - u+it}{2}\;:\; a\;:\; 1\right]
\]
In these coordinates, one sees that
\[
\mathcal{S}_R = \{(a,t,u)\in \mathcal{N}\times \mathbb{R}^+\;:\; ||a|^2+u+it| = 2\}
\]
and then it's easy to show that the $R$-action on $\mathbb{H}_{\mathbb{C}}^2$ is a reflection across this 
isometric sphere.

This is a precise analogue of the action of $S$ on $PSL(2, \mathbb{Z})$ with respect to the geodesic arc $|z|=1$ in the upper half space $\mathfrak{H}$: indeed, define the Hermitian form $<w,z> = \overline{w}z$ for $w,z$ in $\mathfrak{H}$, consider 1 as a cusp of $\mathfrak{H}$ and then set
\[
\mathcal{U}_S:= \{z \in \mathfrak{H}\;:\; <1,z> = <1,Sz>\}.
\]
It's clear that $\mathcal{U}_S$ is the upper half of the circle $|z| = 1$ and that $S$, which acts on $\mathfrak{H}$ by $z \mapsto -\frac{1}{z}$ is a reflection across this arc. 

Not only is $i \in \mathcal{U}_S$ fixed by $S$ in the same way that the points of $i_3 \in \mathcal{S}_R$ are fixed by $R$; but the $S$-action interchanges the center 0 of $\mathcal{U}_S$ and $i\infty$, while the $R$-action switches the center $(0,0,0)$ (in horospherical coordinates) of the isometric sphere $\mathcal{S}_R$ and $q_\infty.$ 

Next we consider fixed points of the generators in  the generating set $\{R_1,R_2,R_3\}$. In the $PSL(2, \mathbb{Z})$ case, $T$ fixes $i\infty$ and the path of integration for the period polynomials is comprised of the path from $i\infty$ (the fixed point of $T$) to $i$ (the fixed point of $S$ on $\mathcal{U}_S$), together with the image thereof under $S$ itself. In the $PU(2,1;\mathbb{Z}[\rho])$ setting, one checks that $R_1$ fixes both $(0,0,0) = [0:0:1]$ and $q_\infty$, and shares with $R$ the fixed point $[-1:0:1] \in i_3.$ $R_2$ fixes $[0:0:1]$ and $[-1:-\rho:1] \in i_3$ while $R_3$ fixes $q_{\infty}$ and $[-1:1:1]\in i_3.$ Within $i_3$ consider the path $j_3$ comprising the geodesic of $\mathbb{H}_{\mathbb{C}}^2$ from the fixed point $[-1: -\rho: 1]$ of $R_2$ to the fixed point $[-1:0:1]$ of $R_1$, followed by the geodesic from $[-1:0:1]$ to the fixed point $[-1:1:1]$ of $R_2$. In summary, $j_3$ comprises the geodesics running from $[-1:-\rho:1]$ to $[-1: 0 :1]$ to $[-1:1:1]$ oriented in the order given. 

Now define the domain $D$ in $\mathbb{H}_{\mathbb{C}}^2$, which is to serve as the analogue of the line from $i\infty$ through $i$ to $0$ in $\mathbb{H},$ to consist of the union of all geodesics from $q_{\infty}$ to the points of $j_3$, together with the union of the images of these lines under the $R$-action, each oriented from a point of $j_3$ to $[0:0:1]$. To describe this hyperplane more precisely, note that in horospherical coordinates, the geodesic from a point $(a,t,u)\in \mathcal{N}\times \mathbb{R}^+$ to $q_\infty$ is 
\[
l_{(a,t,u)} = \{(a,t,w) \in \mathcal{N}\times \mathbb{R}^+\;:\; w \geq u\}
\]
(cf. \cite{FalbelParkerPicard}). In these coordinates we have 
\begin{align*}
 j_3 &=  \{(e^{-i\pi/3}(1-b) ,0,2-(1-b)^2)\in \mathcal{N}\times \mathbb{R}^+\;:\;
b \in [0,1]\} 
\\
 &\cup \{(c ,0,2-c^2)\in \mathcal{N}\times \mathbb{R}^+
 \;:\;
c \in [0,1]\}
,
\end{align*}
so for any point $(a,0, w)$ on $j_3,$ one computes in projective coordinates
\[
l_{(a,0,w)} = \{[-1-\frac{u}{2}: a:1]\;:\; u \in [0,\infty)\},
\]
the image of which under $R$ is
\[
R* l_{(a,0,w)} = \left\{\left[-\frac{2}{2+u}:\frac{2 a}{2+u}:1\right]\;:\; u \in [0,\infty)\right\}.
\]
 Therefore, if $j_3$ is oriented as above,
\[
D = \bigcup_{\substack{ (a,0,w) \in j_3 \\
q \in [0, \infty)}} 
l_{(a,0,w+q)}
\;\;\cup 
\bigcup_{
\substack{ (a,0,w) \in j_3 \\
q \in [0, \infty)}} 
R*l_{(a,0,w+q)}
\]            
may be visualized (in affine coordinates) as two sides of the surface of a triangular prism in $\mathbb{R} \times \mathbb{C}$ which extends from infinity along the negative real axis towards the omitted vertex $(0,0)$.

Finally we define the Picard period polynomial associated to the Picard modular form $f$ of type $M$ as
\[
P_f(X_0,X_1,X_2):=
{\int}_{{\!\!D}} \;\;f(z_1,z_2)(z_1X_0+z_2X_1+X_2)^{3k-3}\;dz_1\wedge dz_2.
\]

First we must establish the
\\
\begin{proposition} 
\label{p:conv}
{The integral $P_f(X_0,X_1,X_2)$ converges.}
\end{proposition}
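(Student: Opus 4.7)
The plan is to exploit the two-fold structure of $D$: writing $D = D_1 \cup D_2$ where $D_1$ denotes the union of geodesics from $j_3$ out to the cusp $q_\infty$ and $D_2 = R * D_1$ is its image under $R$ (running from $j_3$ in to $[0:0:1]$), the two possible sources of divergence are approach to $q_\infty$ on $D_1$ and approach to $[0:0:1]$ on $D_2$. Because $\underline{f}(z_1,z_2; X_0,X_1,X_2)\,dz_1 \wedge dz_2$ is $PU(2,1;\mathbb{Z}[\rho])$-invariant under the combined action on $z$ and on the formal variables from (\ref{e:pa}), the change of variables $z \mapsto Rz$ gives
\[
\int_{D_2} \underline{f}(z; X_0, X_1, X_2)\, dz_1 \wedge dz_2 \;=\; \int_{D_1} \underline{f}(z; R \cdot X_0, R \cdot X_1, R \cdot X_2)\, dz_1 \wedge dz_2,
\]
so that convergence on $D$ reduces to convergence on $D_1$, possibly after replacing each $X_i$ by its $R$-transform.

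On $D_1$ I would use horospherical coordinates. The base $j_3$ sits in the compact slice $\{t=0,\,|a|\leq 1\}$, and above each $(a,0,w) \in j_3$ the geodesic to $q_\infty$ is parametrized by $u \in [w,\infty)$. In projective coordinates this gives $z_1 = -\frac{1}{2}(|a|^2 + u)$ and $z_2 = a$ along $D_1$, so the factor $(z_1 X_0 + z_2 X_1 + X_2)^{3k-3}$ is bounded in absolute value by a constant multiple of $(1+u)^{3k-3}$ uniformly in $a$, and the pulled-back $dz_1 \wedge dz_2$ likewise grows only polynomially in $u$. Thus convergence reduces to an estimate on the decay of $|f(z_1,z_2)|$ as $u \to \infty$ along lines of constant $a$ with $t=0$.

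The main obstacle is establishing this decay. The parabolic stabilizer of $q_\infty$ in $PU(2,1;\mathbb{Z}[\rho])$ contains a discrete Heisenberg lattice of translations coming from $\mathbb{Z}[\rho]$, together with a central element shifting $t$, so the invariance (\ref{e:pmf}) of $f$ forces a Fourier-type expansion in horospherical coordinates analogous to the classical $q$-expansion at a cusp in the elliptic setting. For Runge's theta-constant generators of the ring of Picard modular forms of type $M$, this expansion can be extracted directly from (\ref{e:tc}) after substituting the period matrix $\Omega(z_1, z_2)$ of (\ref{e:pm}): the lattice sum is dominated by terms $\exp(-c\,u)$ for some $c > 0$ times bounded oscillatory factors as $u \to \infty$, so $|f(z_1,z_2)| = O(e^{-cu})$ along the relevant lines, and polynomial combinations of such functions enjoy the same decay. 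Combined with the polynomial bound of the previous paragraph, the integrand on $D_1$ is $O(u^{3k-3} e^{-cu})$, which is integrable on $[w, \infty)$ uniformly over the compact base $j_3$. Together with the $R$-symmetry of the first step this yields convergence on all of $D$.
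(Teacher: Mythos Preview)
Your opening move --- splitting $D = D_1 \cup (R*D_1)$ and using the $R$--invariance of $\underline{f}\,dz_1\wedge dz_2$ to show that convergence on one half is equivalent to convergence on the other --- coincides exactly with the paper's first step (there $D_1$ is called $L$). The two arguments part ways in the second step. The paper does \emph{not} try to establish decay along $D_1$ toward $q_\infty$; instead it works on the opposite piece $R*L$, observes that $R*L\cup\{[0\!:\!0\!:\!1]\}$ is compact, checks that the theta constants stay finite at $[0\!:\!0\!:\!1]$, and concludes that the integral of a bounded integrand over a compact region converges. The $R$--equivalence you already proved then transports this back to $L$.

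Your route via exponential decay on $D_1$ has a gap. When one substitutes the period matrix $\Omega(z_1,z_2)$ of (\ref{e:pm}) into the theta sum (\ref{e:tc}), only the entries $\Omega_{00},\Omega_{02},\Omega_{22}$ depend on $z_1$; the middle entry $\Omega_{11}=-\rho^2$ and the off--diagonal entries $\Omega_{01},\Omega_{12}$ do not. Consequently, for the characteristic $\mathbf{k}=\mathbf{2}$ the lattice term with $n_0=n_2=0$ has exponent independent of $z_1$, so $f_{\mathbf 2}$ tends to a nonzero constant rather than behaving like $e^{-cu}$ as $u\to\infty$. Since Runge's explicit $P_6$ in Appendix~\ref{a:Runge} carries a nonzero pure $f_{\mathbf 2}^{\,6}$ monomial, the assertion that ``the lattice sum is dominated by terms $\exp(-cu)$'' does not go through at the level of the theta building blocks, and you have given no separate argument that the particular polynomial combinations which are genuine Picard modular forms nonetheless decay. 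What modularity under $R$ actually buys on $D_1$ is at best the polynomial estimate $|f(z)|=O(|z_1|^{-3k})$, obtained from boundedness near the opposite cusp $[0\!:\!0\!:\!1]$ --- and that boundedness is precisely what the paper verifies directly on the compact side, sidestepping any decay estimate at $q_\infty$ altogether.
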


\begin{proof} Recall that the domain $D$ of integration is the union of the set (say $L$) of geodesics from the point at infinity $q_{\infty}$ to the points of $j_3$, together with the (suitably re-oriented) image thereof under the action of $R$ (for which we write $R*L$). 

Then, should $P_f(X_0,X_1,X_2)$ converge, it may be written as a sum of integrals over the two respective regions $L$ and $R*L$. Consider the integral over $R*L$:
\begin{align*}
&{\int}_{{\!\!R*L}} \;\;f(z_1,z_2)(z_1X_0+z_2X_1+X_2)^{3k-3}\;dz_1\wedge dz_2.
\\
&=
-\int_{{\!\! L}}\;\; f(R^{-1}(z_1,z_2))
(R^{-1}(z_1)X_0+R^{-1}(z_2)X_1 + X_2)^{3k-3}
\frac{\partial R^{-1}(z_1,z_2)}{\partial (z_1,z_2)}
\;dz_1\wedge dz_2
\end{align*}
where $R^{-1}(z_1)$ and $R^{-1}(z_2)$ is shorthand for the action of $R^{-1}$ on each of $z_1$ and $z_2$ coming from the action on $(z_1,z_2)$ as before, and the sign change comes from the reorientation.  
But $ f(z_1,z_2)(z_1X_0+z_2X_1+X_2)^{3k-3}\;dz_1\wedge dz_2$ is invariant under the action of $R = R^{-1}$. Therefore  
\begin{align*}
& f(R^{-1}(z_1,z_2))
(R^{-1}(z_1)X_0+R^{-1}(z_2)X_1 + X_2)^{3k-3}
\frac{\partial R^{-1}(z_1,z_2)}{\partial (z_1,z_2)}
\;dz_1\wedge dz_2 \\
&
= f(z_1,z_2)(z_1X_2-z_2X_1+X_0)^{3k-3}\;dz_1\wedge dz_2
\end{align*}
since the action of $R$ on $(X_0,X_1,X_2)$ gives $(X_2, -X_1, X_0)$. By the above expression for the integral over $R*L$ in terms of an integral over $L$, we can conclude that the integral over $R*L$ converges if and only if the integral over $L$ converges. 

Now $R*q_\infty = [0 : 0 : 1]$ and one checks that at this point,
\[
\theta
 \left[
 \begin{array}{c}
\mathbf{k}
\\
\mathbf{0}
\end{array}
\right]  (0, 2\Omega(0,0)) = \sum_{(n_0,n_1,n_2) \in \mathbb{Z}^3}
\exp( 2 \pi i\;  (-1)(n_1+\delta_{k2}1/2)^2\rho^2)) 
\]
where $\delta_{k2}$ equals 1 when $\mathbf{k} = \mathbf{2}$ and is 0 otherwise. But this sum is finite since $\Im (\rho^2)<0.$ Because $R*L \cup R*q_{\infty}$ is compact one sees that the integral over $R*L$ converges, and thus the integral over the entire domain $D$ does too. 
\end{proof}

In order to obtain the exact analogue of Theorem \ref{t:1} in this context, we would require information pertaining to the image of $D$ under the uniformization map $\Psi : \mathbb{H}_{\mathbb{C}}^2\rightarrow \mathcal{M}_{0,5}$. Shiga found this map explicitly in terms of theta functions (cf. Proposition I-3 of \cite{Shiga}), but it is very difficult to compute even in the simplest cases. Instead of taking this approach, we will prove directly (working in $\mathfrak{H}_{\mathbb{C}}^2$) that each relation on $PU(2,1;\mathbb{Z}[\rho])$ gives rise to a relation on the period polynomials, from which it follows immediately that any contractible path in $\mathcal{G}_{(01,01)}^{(4)}$ also does.







The presentation of $PU(2,1;\mathbb{Z}[\rho])$ which is most convenient to use here comes from the representation of $PSL(2, \mathbb{Z})$ given by $S \mapsto R$ and $T \mapsto P$; to which $R_1$ is adjoined (again see Proposition 5.10 in \cite{FalbelParkerPicard}):
\[
PU(2,1;\mathbb{Z}[\rho])
=
<R,P,R_1| R^2 = (RP)^6 = R_1^6 = [R_1,R] = PR_1^{-1}P^{-1}R_1^{-1}P = I>
\]




\begin{theorem}
\label{t:2}
The period polynomial $P_f(X_0,X_1,X_2)$ for the Picard modular form $f$ of type $M$ and weight $k$ satisfies the following relations corresponding to defining relations on $PU(2,1;\mathbb{Z}[\rho]):$
\begin{equation}
\label{e:R^2}
P_f(X_0,X_1,X_2) = -P_f(X_2,-X_1,X_0)
\end{equation}
corresponding to $R^2 = I;$

\begin{multline}
P_f(X_0,X_1,X_2)+(-\rho^2)^{k-1} P_f(X_0,-\rho X_1,X_2)+\rho^{k-1} P_f(X_0,\rho^2 X_1,X_2)
\\
\label{e:r1^6}
 +(-1)^{k-1}P_f(X_0,-X_1,X_2)+\rho^{2k-2} P_f(X_0,\rho X_1,X_2)+(-\rho)^{k-1} P_f(X_0,-\rho^2 X_1,X_2) =0
\end{multline}
corresponding to $R_1^6 = I$; 

\begin{multline}
\label{e:PM}
P_f(X_0,X_1,X_2)+\rho^{k-1} P_f(\rho^2 X_0+X_1+X_2, \rho^2 X_0-\rho^2 X_1, X_0) 
\\
 + \rho^{2k-2} P_f(\rho^2 X_2,\rho^2 X_2 - X_1, \rho^2 X_0+X_1+X_2) =0
\end{multline}
corresponding to $(RP)^3 = I;$

\begin{equation}
\label{e:[RR1]}
P_f(X_2, \rho X_1, X_0)+P_f(X_0,-\rho X_1,X_2) = 0
\end{equation}
coming from $[R,R_1] = I$; and the pair of relations
\begin{multline}
\label{e:pent1}
P_f(X_0,X_1,X_2)+\rho^{k-1} P_f(X_0,-\rho^2 X_0+\rho^2 X_1, \rho^2 X_0+X_1+X_2)
\\
=
(-\rho^2)^{k-1}P_f(X_0,\rho X_0 - \rho X_1, \rho^2 X_0+X_1+X_2)
\end{multline}
and
\begin{multline}
\label{e:pent2}
P_f(X_0,X_1,X_2)+\rho^{2k-2} P_f(X_0, X_0+\rho X_1, \rho X_0-\rho X_1+X_2)\\
=
(-\rho)^{k-1}P_f(X_0,-\rho X_0 - \rho^2 X_1, \rho X_0-\rho X_1+X_2)
\end{multline}
coming from $PR_1^{-1}P^{-1}R_1^{-1}P = I.$

\end{theorem}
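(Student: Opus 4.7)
The plan is to derive each stated relation on $P_f$ from the corresponding defining relation of $PU(2,1;\mathbb{Z}[\rho])$ by a single topological argument. The key ingredient is the invariance of the 2-form $\omega(z; X) := f(z_1,z_2)(z_1X_0 + z_2X_1 + X_2)^{3k-3}\,dz_1 \wedge dz_2$ under the combined $PU(2,1;\mathbb{Z}[\rho])$-action (geometric on $(z_1,z_2)$, formal on $(X_0,X_1,X_2)$ via (\ref{e:pa})), which was established just before the theorem. This invariance yields
\[
\int_{g\cdot D} \omega(z;X) \;=\; \int_D \omega(z; g^{-1}\cdot X) \;=\; (\det g^{-1})^{k-1}\, P_f(\sigma_{g^{-1}}(X_0,X_1,X_2))
\]
for every $g \in PU(2,1;\mathbb{Z}[\rho])$, where $\sigma_h$ denotes the unscaled linear part of the action of $h$ on formal variables. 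Moreover, $\omega$ is a $(2,0)$-form on the (simply connected) ball $\mathbb{H}_{\mathbb{C}}^2$ and is therefore automatically closed, whence by Stokes' theorem its integral vanishes over any 2-cycle in $\mathbb{H}_{\mathbb{C}}^2$.

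Given a defining relation $g_1 g_2 \cdots g_n = I$, the strategy is to form the 2-chain $C := \sum_{j=0}^{n-1}(g_1\cdots g_j)\cdot D$ and verify that it is a 2-cycle in $\mathbb{H}_{\mathbb{C}}^2$; then $\int_C \omega = 0$ translates via the displayed identity into a linear relation among the $P_f(\sigma_{h_j}(X))$ with coefficients $(\det h_j)^{k-1}$, where $h_j = (g_1\cdots g_j)^{-1}$. The scalar factors $(-\rho^2)^{k-1}$, $\rho^{k-1}$, etc., in (\ref{e:R^2})--(\ref{e:pent2}) then come out of $\det R = 1$, $\det R_1 = -\rho^2$, and $\det P = \rho$. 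The cycle condition amounts to tracking how $\partial D$, consisting of the two geodesic edges $\ell_{-\rho}$ and $\ell_1$ from $q_\infty$ through $[-1:-\rho:1]$ and $[-1:1:1]$ respectively to $[0:0:1]$, is permuted by each letter of the word. For $R^2=I$: $R$ interchanges $L$ and $R*L$ with orientation reversal (this is essentially what was used in Proposition \ref{p:conv}), so $R\cdot D = -D$, and (\ref{e:R^2}) follows at once. For $R_1^6=I$: $R_1$ rotates by $\pi/3$ about the axis through $q_\infty$, $[-1:0:1]$, $[0:0:1]$, sending $\ell_{-\rho}$ to $\ell_1$, so the six images $R_1^j D$ glue consecutively about this axis into a 2-cycle, yielding (\ref{e:r1^6}). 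For $(RP)^3 = I$, which a direct matrix computation confirms projectively (one finds $(RP)^3 = \rho I$), the three images $(RP)^j D$ wrap about the $RP$-fixed locus to produce the Manin-type relation (\ref{e:PM}). The commutator $[R,R_1]=I$ gives a 4-term chain which collapses via $R\cdot D = -D$ and $RR_1R^{-1} = R_1$ to the 2-term identity (\ref{e:[RR1]})---equivalently, (\ref{e:[RR1]}) is exactly the application of (\ref{e:R^2}) to the transformed variables $R_1\cdot X$.

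The main obstacle is the pentagonal relation $PR_1^{-1}P^{-1}R_1^{-1}P = I$: the associated 5-piece chain does not close into a single 2-cycle, reflecting the non-faithfulness of the $S_4$-action on $\mathcal{B}$ noted in the introduction. Instead I expect the chain to split into two triangular 2-cycles, one producing (\ref{e:pent1}) and the other (\ref{e:pent2}). This decomposition should be visible by exploiting that $P$ fixes $q_\infty$ while $R_1$ fixes both $q_\infty$ and $[0:0:1]$, so the five pieces group naturally according to which of these cusps their shared boundary segments attain; the partial word $PR_1^{-1}$ already returns one to the $q_\infty$-stabilizer, suggesting where the split occurs. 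The final assertion of the theorem---that every defining relation yields one or more of the displayed relations---then follows by applying this uniform strategy to each defining relation in the chosen presentation.
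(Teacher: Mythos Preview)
Your overall strategy---use the $PU(2,1;\mathbb{Z}[\rho])$-invariance of the holomorphic $(2,0)$-form $\omega$, so that a defining relation $g_1\cdots g_n=I$ yields a vanishing integral over a suitable $2$-chain of translates of $D$---is exactly the paper's, and your handling of $R^2=I$, $R_1^6=I$, $(RP)^3=I$ and $[R,R_1]=I$ matches the paper's proof (the paper argues each case by explicit geometry in $\mathbb{H}_{\mathbb{C}}^2$ rather than by invoking Stokes, but the content is the same).

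The gap is the pentagonal relation. Your plan is to form the five-piece chain $\sum_{j=0}^{4}(g_1\cdots g_j)\cdot D$ and hope it ``splits into two triangular $2$-cycles''; you admit this is only an expectation (``I expect'', ``should be visible'', ``suggesting where the split occurs''). The paper does \emph{not} proceed this way. It first decomposes $D=L\cup R*L$, where $L$ is the union of geodesics from $q_\infty$ to $j_3$, and rewrites $P_f$ as a difference of two integrals over $L$ alone (this is the manoeuvre already used in the convergence proof). It then computes explicitly where the boundary geodesics $m_{[-1:-\rho:1]}$ and $m_{[-1:1:1]}$ of $L$ land under each of the five letters $P,R_1^{-1},P^{-1},R_1^{-1},P$, obtaining the diagram in the paper. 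The upshot is a geometric identity on $L$-translates, namely that $L\cup P(L)$ coincides (as oriented $2$-chain) with $(R_1^{-1}P)(L)$; this gives (\ref{e:pent1}), and a mirror identification at the other end gives (\ref{e:pent2}).

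Your heuristic that ``$PR_1^{-1}$ returns one to the $q_\infty$-stabilizer'' does not by itself force a $2$-cycle decomposition at the level of $D$: every letter here fixes $q_\infty$, so all five $D$-translates share that cusp, but their second cusps and their $j_3$-edges do not match up in the way needed. The decomposition into two three-term relations really lives at the level of $L$, and seeing it requires the explicit edge-tracking the paper carries out. Without that device (or an equivalent), the pentagonal step is not complete.
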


\begin{proof} 
Since $R$ maps $D$ to itself but reverses each of the paths in the variable $z_1$ from $q_{\infty}$ to a point of $j_3$ to $[0:0:1]$, 
\begin{align*}
&P_f(X_0,X_1,X_2) 
\\
&= 
-\int_{\!\!R(D)} \;\;f(z_1,z_2)(z_1X_0+z_2X_1+X_2)^{3k-3}\;dz_1\wedge dz_2
 \\
 &= 
 -\int_{\!\! D} \;\;f(R^{-1}(z_1,z_2))((R^{-1}z_1)X_0+(R^{-1}z_2)X_1+X_2)^{3k-3}\;dR^{-1}z_1\wedge dR^{-1}z_2
 \\
 &=
 - \int_{\!\!D} \;\;f(z_1,z_2)(z_1X_0+z_2(-X_1)+X_2)^{3k-3}\;dz_1\wedge dz_2
 \\
 &= -P_f(X_0,-X_1,X_2),
 \end{align*}
using the invariance of the differential form under the action of $R^{-1} = R$, along with the $R$-action on $(X_0,X_1,X_2)$ from (\ref{e:pa}).

Next, recall that $R_1$ has order 6. From the definition it's clear that $R_1$ acts on $D$ by rotation of the coordinate $z_2$ of a point $[z_1:z_2:1]\in D$ through $\frac{2\pi i \cdot 5}{6} = -\rho$ - i.e. a rotation of the triangular region of which $D$ is the boundary by $\pi/3$ in the negative direction. It follows that the union of the images $R_1(D) \cup R_1^2(D) \cup R_1^3(D) \cup R_1^4(D) \cup R_1^5(D)$ taken with the reverse orientation of the variable $z_2$, is homotopic to $D$. Consequently
\begin{multline*}
0 = \int_D + \int_{R_1(D)} + \int_{R_1^2(D)}
+ \int_{R_1^3(D)} + \int_{R_1^4(D)} 
\\
+ \int_{R_1^5(D)}  \;\;f(z_1,z_2)(z_1X_0+z_2X_1+X_2)^{3k-3}\;dz_1\wedge dz_2
\end{multline*}
and the second equation in the theorem follows as before. 

To prove (\ref{e:PM}) we will work with the path $\gamma_\beta$ in $D$ consisting of the geodesic from $q_{\infty}$ to some fixed point $\beta$ of $j_3$ together with the path from $\beta$ to $[0:0:1]$ which is the image thereof under $R$ (with reversed orientation) and show that $\gamma_{\beta} \cup (RP)\gamma_{\beta} \cup (RP)^2 \gamma_{\beta}$ is a (contractible) triangle in $\mathbb{H}_{\mathbb{C}}^2$. The claim follows then from the fact that
\[
D = \bigcup_{\beta \in j_3} \gamma_{\beta}.
\]
First observe that $RP$ acts as follows on the cusps of $D:$
\[
q_\infty \;\stackrel{RP}{\mapsto} \;[0:0:1] \;\stackrel{RP}{\mapsto} \;[\rho^2:1:1] \;\stackrel{RP}{\mapsto}\;q_{\infty}.
\]
But then any $\gamma_{\beta}$ is mapped by $RP$ to a path from $[0:0:1]$ to $[\rho^2:1:1]$, which in turn maps to a path from $[\rho^2:1:1]$ to $q_{\infty}.$ Since each of these is comprised of geodesics because the elements of $PU(2,1;\mathbb{Z}[\rho])$ preserve the topology on $\mathbb{H}_{\mathbb{C}}^2$, these paths are mutually non-intersecting, and therefore 
they form the boundary of a triangle. Because the integral giving the Picard period polynomial is defined at the cusps, the integral over this triangle is zero. Repeating this for each $\beta$ in $j_3$, it follows that 
\begin{align*}
0 &=
\int_D + \int_{RP(D)} + \int_{(RP)^2(D)} 
\;\;f(z_1,z_2)(z_1X_0+z_2X_1+X_2)^{3k-3}\;dz_1\wedge dz_2
\end{align*}
from which a similar calculation as above yields (\ref{e:PM}).

Next consider the action of $R$, $R_1 \circ R$,  $R^{-1}\circ R_1\circ R$ and $R_1^{-1}\circ R^{-1} \circ R_1 \circ R$ successively on $D$. As discussed above the involution $R$ reflects $D$ about $j_3$. Then $R_1$ rotates that image by $-\frac{\pi}{3}$; and this 2-simplex is in turn reflected about the image therein of $j_3$. $R_1^{-1}$ finally acts by a rotation about $z_2 = 0$ of $\frac{\pi}{3}$ so that the final image is $D$ itself. Above we established the equation (\ref{e:R^2}) and here the reflection produced by $R$ on $(R_1\circ R)(D)$ similarly gives
\begin{multline}
0=\int_{(R_1 R)(D)} \;\;f(z_1,z_2)(z_1X_0+z_2X_1+X_2)^{3k-3}\;dz_1\wedge dz_2
\\+
\int_{(RR_1 R)(D)} \;\;f(z_1,z_2)(z_1X_0+z_2X_1+X_2)^{3k-3}\;dz_1\wedge dz_2
\end{multline}
from which (\ref{e:[RR1]}) follows. 

Finally, in considering the palindromic relation \begin{equation}
\label{e:pent}
PR_1^{-1}P^{-1}R_1^{-1}R=I,
\end{equation}
 recall the idea used in the proof of Proposition \ref{p:conv}: as before if $L$ denotes the points of $D$ lying on geodesics from $q_{\infty}$ to points of $j_3,$ 
then as explained in that proof, we can write
\begin{multline}
P_f(X_0,X_1,X_2) = \int_L \;\;f(z_1,z_2)(z_1X_0+z_2X_1+X_2)^{3k-3}\;dz_1\wedge dz_2
\\
-
\int_L  \;\;f(z_1,z_2)(z_1X_2-z_2X_1+X_0)^{3k-3}\;dz_1\wedge dz_2
\end{multline}
Notice that here, for any $A \in PU(2, 1; \mathbb{Z}[\rho]),$ if we compute
\[
\int_{A(D)}\;\;f(z_1,z_2)(z_1X_2-z_2X_1+X_0)^{3k-3}\;dz_1\wedge dz_2
\]
by viewing it as 
\[
\int_{A(L)} + \int_{A(RL)}\;\;f(z_1,z_2)(z_1X_0+z_2X_1+X_2)^{3k-3}\;dz_1\wedge dz_2,
\]
we still obtain 
\begin{multline}
\int_{L}\;\;f(z_1,z_2)(z_1A(X_0)+z_2A(X_1)+A(X_2))^{3k-3}\;dz_1\wedge dz_2
\\ -
\int_L \;\;f(z_1,z_2)(z_1A(X_2)-z_2A(X_1)+A(X_0))^{3k-3}\;dz_1\wedge dz_2
\end{multline}
using the invariance of $f(z_1,z_2)(z_1X_0+z_2X_1+X_2)^{3k-3}\;dz_1\wedge dz_2$ under $PU(2, 1;\mathbb{Z}[\rho])$ because the action on $(X_0,X_1,X_2)$ used in the second integral is that of $(R^{-1}A^{-1})^{-1} = AR.$ (Indeed, set $(RX_0,RX_1,RX_2) = (A^{-1}Y_0,A^{-1}Y_1, A^{-1}Y_2)$, use the $A$-invariance of the 2-form to get the expression  $f(z_1,z_2)(z_1Y_0+z_2Y_1+Y_2)^{3k-3}\;dz_1\wedge dz_2$ and finally replace $Y_j$ by $(AR)X_j$ for $j=0,1,2.$)

Hence we need only consider the images of $L$ under 
 the successive factors of the relation (\ref{e:pent}), as opposed to considering the entire $D$. 

Denote by $m_{\alpha}$ the geodesic from $q_{\infty}$ to a point $\alpha$ of $j_3.$ It turns out that $P$, $R_1^{-1}$ and $P^{-1}$ act (successively) on certain  specific geodesics $m_{\alpha}$ as follows:
\[
m_{[-1:-\rho:1]} \;\stackrel{P}{\mapsto} \;
m_{[-1:1:1]} \; \stackrel{R_1^{-1}}{\mapsto} \;
m_{[-1:\rho:1]}\;\stackrel{P^{-1}}{\mapsto}
\;
m_{[\rho^2:0:1]} \; \stackrel{R_1^{-1}}{\mapsto} \;
m_{[\rho^2:0:1]} \;\stackrel{P}{\mapsto}\;
m_{[-1:-\rho:1]}
\]
and 
\[
m_{[-1:1:1]} \;\stackrel{P}{\mapsto} \;
m_{[\rho:0:1]} \; \stackrel{R_1^{-1}}{\mapsto} \;
m_{[\rho:0:1]}\;\stackrel{P^{-1}}{\mapsto}
\;
m_{[-1:1:1]} \; \stackrel{R_1^{-1}}{\mapsto} \;
m_{[-1:-\rho:1]} \;\stackrel{P}{\mapsto}\;
m_{[-1:1:1]}
\]
To ease the discussion, let us refer to the geodesic $m_{[-1:1:1]}$ and its image in each successive image of $L$ as the {\em leading edge} of the 2-simplex, and similarly refer to $m_{[-1:-\rho:1]}$ and its images as the {\em trailing edges}. 
 
With further elementary calculations one sees that $P$ acts by translating $L$ so that the geodesic $m_{[-1:-\rho:1]}$ lands on $m_{[-1:1:1]}$ and $m_{[-1:1:1]}$ is moved to $m_{[\rho:0:1]}$. Next  $R_1^{-1}$ acts as a rotation about the geodesic $m_{[\rho:0:1]}$; and $P^{-1}$ translates this rotated image of $L$ so that the leading edge lands on $m_{[-1:1:1]}$ and the trailing edge becomes $m_{[\rho^2:0:1]}$. Another rotation under the action of $R_1^{-1}$ follows - this time about $m_{[\rho^2:0:1]}$. This maps the leading edge to $m_{[-1:-\rho:1]}$ and finally $P$ gives a translation onto the original $L$.

The following diagram is a schematic representing the image of $j_3$ under this succession of transformations, in which the edges marked $H$ are the images of the path from $[-1:-\rho:1]$ to $[-1:0:1]$:
 
 \begin{center}
 \begin{tikzpicture}
 \centering
 \filldraw (0,0) node [left=1pt] {$[-1:\rho:1]$} circle (2pt);
\filldraw (6,0) node [right=1pt] {$[-1:1:1]$} circle (2pt);
\draw (0,0) edge node  {$H$} (3,-1);
\draw (3,-1) edge (6,0);
\draw (6,0) edge node {$H$} (5.366,2.9019);
\draw (5.366,2.9019) edge (3, 5.196);
\draw (6,0) edge  (5.366,-2.9019);
\draw (5.366,-2.9019) edge node {$H$} (3, -5.196);
\filldraw (3,5.196) node [left=1pt] {$[\rho:0:1]$} circle (2pt);
\filldraw (3,-5.196) node [right=1pt] {$[\rho^2:0:1]$} circle (2pt);
\draw (3, 5.196) edge (2.366025, 2.09807);
\draw (2.366025, 2.09807) edge node {$H$} (0,0);
\draw (3, -5.196) edge node {$H$} (2.366025, -2.09807);
\draw (2.366025, -2.09807) edge (0,0);
\draw[-<] (2.7,-1) to [out = 270, in = 200] node [right=1pt] {$P^{-1}$} (2.1, -1.5);
\draw[<-] (3.6, -4) to [out = 90, in = 0] node [right=1pt] {$R_1$} (3.1,-3.5);
\draw[->] (5,0) to [out = 90, in = 135] node [left=1pt] {$P$} (5.5,0.8);
\draw[-<] (3.5, 4.2) to [out = 270, in = 225] node [right=1pt] {$R_1^{-1}$} (3,3.7);\end{tikzpicture}
 \end{center}

As is clear from the diagram above, integrating along the 2-simplex $L$ followed by $P(L)$ is the same as integrating along $(R_1^{-1}P)(L)$. This fact gives (\ref{e:pent1}). Similarly, integrating along $L$ reversing the orientation in the variable $z_2$ (i.e. from $m_{[-1:1:1]}$ to $m_{[-1:-\rho:1]}$) then along $P^{-1}(L) = (R_1^{-1}P^{-1}R_1^{-1}P)(L)$ (with the same reversal of the $z_2$-orientation) is the same as integrating along $(P^{-1}R_1^{-1}P)(L)$ again with similar reversal of orientation. From this fact the second equation (\ref{e:pent2}) coming from this relation can be shown to hold as before.
 \end{proof}

{\bf Remarks.} The equation (\ref{e:[RR1]}) is clearly equivalent to (\ref{e:R^2}).

Equation (\ref{e:PM}) is the Picard period polynomial analogue of the Manin equation (\ref{e:DA2}). Notice that our proof follows the classical approach given for example in \cite{LangIntroModForms} of integrating over triangles in the universal covering space rather than working in the moduli space directly as we were able to do in the proof of Theorem \ref{t:1}.

Equations (\ref{e:pent1}) and (\ref{e:pent2}) are hexagonal-type equations similar to the Manin relation (\ref{e:PM}) but which come out of the pentagonal equation (\ref{e:pent}) on $PU(2,1; \mathbb{Z}[\rho]).$ A geometric reason that we would not expect a pentagonal equation to be faithfully represented on the symmetry relations satisfied by the $P_f(X_0,X_1,X_2)$ is that the orbit of any tangential basepoint for a pentagonal path $\gamma_5$ in $\mathcal{M}_{0,5}$ under the induced $S_4$ action on tangential basepoints does not include all of the other tangential basepoints which occur in $\gamma_5:$ indeed, as is explained in \cite{Ihara:braids}, an element of $S_5$ of order 5 permutes these basepoints.

The computations needed for the proof show that the 2-simplex $P(D)$ and the 2-simplex $(P^{-1}R_1^{-1}P)(D)$ are complex conjugates of one another. This gives the following further (non-trivial) relation on period polynomials for Picard modular forms:
\[
P_f(X_0,-\rho^2X_0+\rho^2 X_1 , \rho^2X_0 +X_1+X_2)
=\overline{P_f}(X_0, -\rho X_0 -\rho^2 X_1, \rho X_0 - \rho X_1 + X_2)
\]
where $\overline{P_f}$ denotes complex conjugation.

Finally we remark that since we know that every relation on $PU(2,1;\mathbb{Z}[\rho])$ yields a relation on the period polynomials for Picard modular forms, the relations coming from the presentation of $PU(2,1;\mathbb{Z}[\rho])$ using the generators $R_1,R_2$ and $R_3$ do so as well. For example, since $R_3 = PR_1^{-1}$, one can deduce $R_3^6 = I$  from $R_1^6=I$ along with repeated application of $PR_1^{-1}P^{-1}R_1^{-1}P = I$ - indeed, $P^{-1}R_1=PR_1^{-1}P^{-1}$ so 
\begin{align*}
I &= P^{-1}R_1^6 P = (P^{-1}R_1)R_1^5 P
\\
&= (PR_1^{-1}P^{-1})R_1^5 P = (PR_1^{-1})(PR_1^{-1}P^{-1})R_1^4 P
\\
&= \cdots = (PR_1^{-1})^6 = R_3^6.
\end{align*}
Consequently, through repeated application of \ref{e:pent1} and \ref{e:pent2} to \ref{e:r1^6}, the equation corresponding to $R_3^6=I$ would result, namely 
\begin{align*}
0&= P_f(X_0,X_1,X_2)+
P_f(X_0, -\rho^2X_0-\rho X_1, \rho^2 X_0-\rho^2X_1+X_2)
\\
&+
P_f(X_0, (1-\rho^2)X_0+\rho^2 X_1, (\rho^2-1)X_0 +(1-\rho^2)X_1+X_2)
\\
&+
P_f(X_0, 2X_0 -X_1,-2X_0+2X_1+X_2)
\\
&
+
P_f(X_0, (1-\rho)X_0+\rho X_1, (\rho-1)X_0+(1-\rho)X_1+X_2)
\\
&+
P_f(X_0,-\rho X_0-\rho^2X_1, \rho X_0-\rho X_1+X_2).
\end{align*}


Combining Theorem \ref{t:2} with the monodromy mapping from \S \ref{s:2.1} gives the
\begin{corollary}
Every contractible path in $\pi_1((\mathbb{P}^1\backslash\{0,1,\infty\})^2\backslash\Delta, (\overrightarrow{01},\overrightarrow{01})$ gives a relation on $P_f(X_0,X_1,X_2)$ in the sense that one or more of the equations in Theorem \ref{t:2} can be associated with the path once it is mapped into $PU(2,1;\mathbb{Z}[\rho])$.
\end{corollary}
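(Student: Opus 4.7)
The plan is to combine the surjective homomorphism $\mathcal{T}:\mathcal{G}_{(01,01)}^{(4)}\to PU(2,1;\mathbb{Z}[\rho])$ of \S\ref{s:2.1} with the defining-relation-by-defining-relation analysis carried out in Theorem \ref{t:2}. Given a contractible loop $\gamma$ in $\pi_1((\mathbb{P}^1\backslash\{0,1,\infty\})^2\backslash\Delta,(\overrightarrow{01},\overrightarrow{01}))$, I first ask whether $\gamma$ admits a representative in the subgroup $\Gamma_{(01,01)}^{(4)}\subset\mathcal{G}_{(01,01)}^{(4)}$. If not, the associated relation on $P_f$ is declared trivial, in accordance with the clarifying remark following the statement of Corollary B.0 in the Introduction.

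Otherwise, I would write $\gamma$ as a word $w = r_{\sigma_1}^{\epsilon_1}\odot\cdots\odot r_{\sigma_n}^{\epsilon_n}$ in the free generators of $\mathcal{G}_{(01,01)}^{(4)}$. Because the monodromy homomorphism $\mathcal{U}$ of \S\ref{s:2.1} is the restriction of $\mathcal{T}$ to $\Gamma_{(01,01)}^{(4)}$, and because $\gamma$ is contractible, $\mathcal{T}(w)$ equals the identity in $PU(2,1;\mathbb{Z}[\rho])$. Using the presentation
\[
PU(2,1;\mathbb{Z}[\rho]) = \langle R, P, R_1 \mid R^2 = (RP)^6 = R_1^6 = [R_1,R] = PR_1^{-1}P^{-1}R_1^{-1}P = I\rangle,
\]
the word $\mathcal{T}(w)$ can be reduced to the identity through finitely many insertions of the five defining relators and their conjugates. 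Theorem \ref{t:2} provides, for each of these relators, at least one explicit equation among (\ref{e:R^2})--(\ref{e:pent2}) on $P_f(X_0,X_1,X_2)$.

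It then remains to check that a \emph{conjugate} of a defining relator also yields a (possibly trivial) relation on $P_f$, so that the word-level reduction of $\mathcal{T}(w)$ genuinely translates into a compound identity among values of the period polynomial. This follows from the $PU(2,1;\mathbb{Z}[\rho])$-invariance of the integrand $f(z_1,z_2)(z_1X_0+z_2X_1+X_2)^{3k-3}\,dz_1\wedge dz_2$: conjugation by $A\in PU(2,1;\mathbb{Z}[\rho])$ simultaneously transports the integration simplex by $A$ and substitutes $X_j\mapsto A X_j$ according to (\ref{e:pa}), and the two operations cancel out to leave an equivalent integral identity with linearly transformed arguments. Concatenating the relations arising from each factor in the chosen decomposition of $\mathcal{T}(w)$ then yields the required relation associated to $\gamma$.

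The main obstacle is ensuring that this word-reduction produces a substantive rather than purely tautological relation, and precisely for this reason the statement tolerates the ``trivial'' option. The pentagonal path in $\pi_1(\mathcal{M}_{0,5})$ studied by Ihara \cite{Ihara:braids} is the cautionary example: there the $S_4$-action on the sixteen tangential basepoints in $\mathcal{B}$ fails to separate the five successive basepoints along the pentagonal path, so the associated word in $PU(2,1;\mathbb{Z}[\rho])$ collapses into the kernel of $\Upsilon$ and the resulting relation on $P_f$ degenerates into a trivial composition rather than the hoped-for genuine five-term identity. This is precisely the source of the ``possibly trivial'' disclaimer in the statement.
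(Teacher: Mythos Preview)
Your argument is correct in substance and matches the paper's approach, which is literally the single sentence ``Combining Theorem~\ref{t:2} with the monodromy mapping from \S\ref{s:2.1} gives the Corollary.'' You have elaborated the mechanism (word in generators $\to$ identity in $PU(2,1;\mathbb{Z}[\rho])$ $\to$ decomposition into defining relators $\to$ equations of Theorem~\ref{t:2}) that the paper leaves implicit.

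One small divergence: you route the argument through $\mathcal{T}:\mathcal{G}_{(01,01)}^{(4)}\to PU(2,1;\mathbb{Z}[\rho])$ and hence introduce a case split on whether $\gamma$ lies in $\Gamma_{(01,01)}^{(4)}$. The paper instead intends the Shiga monodromy representation $\mathcal{P}:\pi_1(\mathcal{M}_{0,5},\alpha_0)\to\Gamma_1\subset PU(2,1;\mathbb{Z}[\rho])$, which is defined on the \emph{entire} fundamental group; so no such case split is needed, and the ``trivial'' clause in the Introduction refers to the possibility that the image word is freely reducible (yielding $0=0$), not to $\gamma$ falling outside $\Gamma_{(01,01)}^{(4)}$. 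Your remark on conjugates of relators and your closing paragraph on the pentagonal obstruction are apt and consistent with the paper's subsequent discussion.
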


This is certainly a less satisfactory result than Theorem \ref{t:1}. Although we have an explicit description of $\Gamma_1$ in terms of the paths $\delta_j$ in $(\mathbb{P}^1\backslash\{0,1,\infty\})^2\backslash\Delta$ given by Shiga (see \S \ref{s:1.1} above), the more explicit association of the paths in $\Gamma_{(01,01)}^{(4)}$ with elements of $PU(2,1;\mathbb{Z}[\rho])$ doesn't yield any relations since it is itself a free group, as a subgroup of a free group by Proposition \ref{p:free}. The interesting fact in this connection was mentioned in the introduction: as Ihara explains in \cite{Ihara:braids}, the pentagonal relation on paths in $\mathcal{M}_{0,5}$ runs between 5 specific tangential basepoints which are permuted by the action of a 5-cycle in $S_5$ on the canonical $S_5$-action on the space. Consequently the $S_4$-orbit of any of these tangential basepoints does not include all of the others. For this reason, any pentagonal path in $\mathcal{M}_{0,5}$ which gives rise to the pentagonal relation on $GT$ will not give rise to a five term relation on the Picard period polynomials.

\appendix
\section{Runge's ring of Picard modular forms for $PU(2,1; \mathbb{Z}[\rho])$}
\label{a:Runge}

For the notation and terminology used in this appendix, see \S \ref{s:2.2} above.

In \cite{RungePicardMod}, Runge proved the following theorem:
\begin{appthmARunge}
The ring of Picard modular forms of Picard type $M$
is given by 
\[
\mathbb{C}[P_6^2, P_{12},P_6P_{18},P_{18}^2]
\]
where for $j=6, 12$ and $18$, $P_j$ is a homogeneous polynomial of degree $j$ in the theta constants $f_{\mathbf{k}}(\Omega)$ of (\ref{e:tc}) (with $\mathbf{k} = \mathbf{1},\mathbf{2}$ or $\mathbf{3}$ as above). Together these polynomials generate the ring of invariants of the polynomial ring in the $f_{\mathbf{k}}$ under the action of the group generated by 
\[
\frac{1}{2}
\left[ \begin{array}{ccc}
-1+i\sqrt{3} & 0 & 0
\\
i(1+\sqrt{3}) & -1+i & 1+i 
\\
(2+\sqrt{3})(1-i) & -1+i &-1-i
\end{array}
        \right]
        \]
        and 
        \[\frac{1}{4}\left[ \begin{array}{ccc}
                0& -2+2i\sqrt{3} & 0
                \\
                -1+\sqrt{3}-i-i\sqrt{3} & 1+\sqrt{3}-i+i\sqrt{3} & -1+\sqrt{3}-i-i\sqrt{3}
                \\
                -1-3\sqrt{3}-i-i\sqrt{3} & -1+\sqrt{3}-i-i\sqrt{3} & -3-\sqrt{3}+i+i\sqrt{3} 
                \end{array}\right].      
        \]
\end{appthmARunge}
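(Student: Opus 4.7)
The plan is to realize the ring of Picard modular forms of type $M$ as an invariant ring under an action of a finite group on the polynomial ring $\mathbb{C}[f_{\mathbf{1}}, f_{\mathbf{2}}, f_{\mathbf{3}}]$ generated by the three theta constants of (\ref{e:tc}), and then to carry out classical invariant theory to exhibit the generators $P_6^2,\,P_{12},\,P_6P_{18},\,P_{18}^2$ together with the single relation that forces the grading structure. Specifically, I would mimic Runge's own strategy for Siegel modular forms, adapted to the Picard locus $\mathfrak{H}(M)\subset \mathfrak{H}_3$.

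First I would verify that each $f_{\mathbf{k}}(\Omega)$ is a modular form of weight $1/2$ (or a suitable fractional weight) for an explicit finite-index subgroup of $Sp(3,\mathbb{Z})$ containing a congruence subgroup at level 2, using the classical transformation formula for theta constants with characteristics. Restricted to $\mathfrak{H}(M)$ via the period map (\ref{e:pm}), the $f_{\mathbf{k}}$ become holomorphic functions on $\mathbb{H}_{\mathbb{C}}^2$, and the subgroup $\Gamma_M\simeq PU(2,1;\mathbb{Z}[\rho])$ acts on the triple $(f_{\mathbf{1}},f_{\mathbf{2}},f_{\mathbf{3}})$ by a representation that I would compute explicitly on a system of generators of $\Gamma_M$ (for instance on $R,P,R_1$ from \S \ref{s:2.2}). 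The point to establish is that this representation factors through the finite group $H$ generated by the two $3\times 3$ matrices displayed in the theorem; this is a finite calculation tracking how $R,P,R_1$ act on the level-2 characteristics $\mathbf{1},\mathbf{2},\mathbf{3}$.

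Next I would compute the invariant ring $\mathbb{C}[f_{\mathbf{1}},f_{\mathbf{2}},f_{\mathbf{3}}]^{H}$. Applying Molien's formula to the two generating matrices gives the Hilbert series of the invariants; matching this Hilbert series against the candidate subring, one finds generators in degrees $6,\,12,\,18$, concretely produced by Reynolds averaging of the monomials $f_{\mathbf{1}}^6,\,f_{\mathbf{1}}^{12},\,f_{\mathbf{1}}^{18}$ (or products thereof) over $H$. The relation between these generators that collapses the invariant ring to the four-generator presentation $\mathbb{C}[P_6^2,P_{12},P_6P_{18},P_{18}^2]$ is a single quadric $(P_6^2)(P_{18}^2)=(P_6P_{18})^2$, together with the parity constraint that only even powers of $P_6$ and $P_{18}$ (or their product) are invariant — this parity is precisely what forces the four-generator description rather than the naive $\mathbb{C}[P_6,P_{12},P_{18}]$, and it should come out of the determinant character of $H$ on $P_6$ and $P_{18}$ being nontrivial while their product and squares are trivial.

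Finally I would close the loop by showing the inclusion of the invariant ring into the ring of Picard modular forms of type $M$ is an equality. One direction is automatic from the invariance under $H$ together with the observation that the transformation factor in (\ref{e:mPt}) for $\gamma\in\Gamma_M$ reduces, on the Picard locus, to the scalar by which the theta monomial transforms. For the reverse inclusion I would use a dimension count: compute $\dim M_k(\Gamma_M)$ for each weight $k$ from Riemann--Roch on the Baily--Borel compactification of $\Gamma_M\backslash\mathbb{H}_{\mathbb{C}}^2$ (as in Holzapfel's work \cite{Holzapfel}) and compare with the Hilbert series of $\mathbb{C}[P_6^2,P_{12},P_6P_{18},P_{18}^2]$. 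The main obstacle here, and the technical heart of the argument, is the Hilbert-series matching combined with the determination that the action on $(f_{\mathbf{1}},f_{\mathbf{2}},f_{\mathbf{3}})$ really does factor through the finite group $H$ displayed in the statement and through no smaller quotient; once those two facts are in hand, the presentation follows from standard invariant theory of finite reflection-type groups in three variables.
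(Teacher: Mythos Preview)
The paper does not prove this statement at all: it is quoted in the appendix as a result of Runge, with an explicit citation to \cite{RungePicardMod}, and no argument is supplied. The paper's sole contribution in this appendix is the explicit computation of the polynomials $P_6$ and $P_{12}$ (and the remark that $P_{18}$ has 190 terms), carried out by running Hawes' \texttt{InvariantRing} package in Macaulay2. So there is no ``paper's own proof'' to compare your proposal against.

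That said, your outline is a reasonable sketch of the strategy Runge actually uses in \cite{RungePicardMod}: reduce the problem to finite-group invariant theory on the theta constants, identify the relevant finite quotient $H$ acting on $(f_{\mathbf{1}},f_{\mathbf{2}},f_{\mathbf{3}})$, compute generators of $\mathbb{C}[f_{\mathbf{1}},f_{\mathbf{2}},f_{\mathbf{3}}]^H$, and then argue that this exhausts all Picard modular forms of type $M$ by a dimension comparison. Your explanation of why one lands on $\mathbb{C}[P_6^2,P_{12},P_6P_{18},P_{18}^2]$ rather than $\mathbb{C}[P_6,P_{12},P_{18}]$ --- namely that $P_6$ and $P_{18}$ carry a common nontrivial character of $H$ while $P_{12}$ is genuinely invariant --- is the right picture. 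What you have written is a plan rather than a proof: the two substantive verifications (that the $\Gamma_M$-action on the theta triple factors through exactly the group generated by the two displayed matrices, and that the Hilbert series of the invariant ring matches the Riemann--Roch dimension count on the Picard modular surface) are asserted but not carried out, and those are where the work lies. For the purposes of this paper, however, the theorem is simply imported from Runge, so no proof was expected.
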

Yet, Runge never gave the explicit polynomials which appear here. Running Hawes' InvariantRing package \cite{Hawes:M2} on Macaulay2 took almost two weeks to complete on a state-of-the-art machine. For the sake of facilitating explicit computations of at least some of the Picard period polynomials, we reproduce the polynomials $P_6$ and $P_{12}$ below. $P_{18}$ has 190 terms.

\[  P_6 =                                              
(-\frac{23}{8}i\rho - \frac{23}{16}i +\frac{19}{8})f_{\mathbf{1}}^6   
+ (-\frac{31}{4}i\rho -\frac{31}{8}i+7)f_{\mathbf{1}}^5f_{\mathbf{2}}+
(\frac{55}{8}i\rho+\frac{55}{16}i-\frac{55}{8})f_{\mathbf{1}}^4f_{\mathbf{2}}^2
\]
\[
+(\frac{15}{2}i\rho+\frac{15}{4}i-5)f_{\mathbf{1}}^3f_{\mathbf{2}}^3
+(\frac{55}{8}i\rho+\frac{55}{16}i-\frac{55}{8})f_{\mathbf{1}}^2f_{\mathbf{2}}^4
+(-\frac{31}{4}i\rho-\frac{31}{8}i+7)f_{\mathbf{1}}f_{\mathbf{2}}^5
\]
\[
+(-\frac{23}{8 }i\rho -\frac{23}{16}i +\frac{19}{8} )f_{\mathbf{2}}^6
+(-\frac{167}{24}i\rho -\frac{113}{12} -\frac{167}{24}\rho+\frac{59}{24})f_{\mathbf{1}}^5f_{\mathbf{3}}
\]
\[
+(-\frac{235}{24}i\rho-\frac{40}{3}i-\frac{235}{24}\rho+\frac{85}{24})f_{\mathbf{1}}^4f_{\mathbf{2}}f_{\mathbf{3}}  
+(\frac{65}{12}i\rho +\frac{85}{12}i +\frac{65}{12}\rho-\frac{5}{3})f_{\mathbf{1}}^3f_{\mathbf{2}}^2f_{\mathbf{3}}
\]
\[
+ (\frac{65}{12}i\rho + \frac{85}{12}i + \frac{65}{12}\rho - \frac{5}{3})f_{\mathbf{1}}^2 f_{\mathbf{2}}^3 f_{\mathbf{3}} 
+ (- \frac{235}{24}i\rho - \frac{40}{3}i - \frac{235}{24}\rho + \frac{85}{24})f_{\mathbf{1}}f_{\mathbf{2}}^4 f_{\mathbf{3}} 
\]
\[
+ (- \frac{167}{24}i\rho - \frac{113}{12}i - \frac{167}{24}\rho + \frac{59}{24})f_{\mathbf{2}}^5 f_{\mathbf{3}} 
+ (- \frac{155}{16}i - \frac{35}{3}\rho - \frac{35}{6})f_{\mathbf{1}}^4 f_{\mathbf{3}}^2  
+ (- \frac{25}{2}i - \frac{85}{6}\rho - \frac{85}{12})f_{\mathbf{1}}^3 f_{\mathbf{2}}f_{\mathbf{3}}^2  
\]
\[
+ (- \frac{45}{8}i - 5\rho - \frac{5}{2})f_{\mathbf{1}}^2 f_{\mathbf{2}}^2 f_{\mathbf{3}}^2  
+ (- \frac{25}{2}i - \frac{85}{6}\rho - \frac{85}{12})f_{\mathbf{1}}f_{\mathbf{2}}^3 f_{\mathbf{3}}^2  
+ (- \frac{155}{16}i - \frac{35}{3}\rho - \frac{35}{6})f_{\mathbf{2}}^4 f_{\mathbf{3}}^2 
\]
\[ 
+ (\frac{65}{12}i\rho - \frac{5}{3}i - \frac{65}{12}\rho - \frac{85}{12})f_{\mathbf{1}}^3 f_{\mathbf{3}}^3  
+ (\frac{35}{4}i\rho - \frac{15}{4}i - \frac{35}{4}\rho - \frac{25}{2})f_{\mathbf{1}}^2 f_{\mathbf{2}}f_{\mathbf{3}}^3  
\]
\[
+ (\frac{35}{4}i\rho - \frac{15}{4}i - \frac{35}{4}\rho - \frac{25}{2})f_{\mathbf{1}}f_{\mathbf{2}}^2 f_{\mathbf{3}}^3  
+ (\frac{65}{12}i\rho - \frac{5}{3}i - \frac{65}{12}\rho - \frac{85}{12})f_{\mathbf{2}}^3 f_{\mathbf{3}}^3  
+ (\frac{85}{24}i\rho + \frac{85}{48}i - \frac{25}{8})f_{\mathbf{1}}^2 f_{\mathbf{3}}^4  
\]
\[
+ (\frac{85}{12}i\rho + \frac{85}{24}i -\frac{ 25}{4})f_{\mathbf{1}}f_{\mathbf{2}}f_{\mathbf{3}}^4  
+ (\frac{85}{24}i\rho + \frac{85}{48}i - \frac{25}{8})f_{\mathbf{2}}^2 f_{\mathbf{3}}^4  
+ (\frac{7}{8}i\rho + \frac{5}{4}i + \frac{7}{8}\rho - \frac{3}{8})f_{\mathbf{1}}f_{\mathbf{3}}^5  
\]
\[
+ (\frac{7}{8}i\rho + \frac{5}{4}i + \frac{7}{8}\rho - \frac{3}{8})f_{\mathbf{2}}f_{\mathbf{3}}^5
+ (\frac{3}{16}i + \frac{1}{6}\rho + \frac{1}{12})f_{\mathbf{3}}^6
\]

  \[ P_{12}= 
  \left(- \frac{6637}{128}i - \frac{22975}{384}\rho - \frac{22975}{768}\right)f_{\mathbf{1}}^{12}   
  + \left(- \frac{11493}{64}i - \frac{415}{2}\rho - \frac{415}{4}\right)f_{\mathbf{1}}^{11}  f_{\mathbf{2}} 
  \]
  \[
  + \left(\frac{249}{64}i + \frac{294}{64}\rho + \frac{295}{128}\right)f_{\mathbf{1}}^{10}  f_{\mathbf{2}}^2  
  + \left(\frac{82615}{64}i + \frac{8945}{6}\rho + \frac{8945}{12}\right)f_{\mathbf{1}}^9 f_{\mathbf{2}}^3  
  \]
  \[
  + \left(\frac{223005}{128}i + \frac{257445}{128}\rho + \frac{257445}{256}\right)f_{\mathbf{1}}^8 f_{\mathbf{2}}^4  
  + \left(\frac{60711}{32}i + 2190\rho + 1095\right)f_{\mathbf{1}}^7 f_{\mathbf{2}}^5  
  \]
  \[
  + \left(\frac{1743}{32}i + \frac{2065}{32}\rho + \frac{2065}{64}\right)f_{\mathbf{1}}^6 f_{\mathbf{2}}^6  
  + \left(\frac{60711}{32}i + 2190\rho + 1095\right)f_{\mathbf{1}}^5 f_{\mathbf{2}}^7  
  \]
  \[
  + \left(\frac{223005}{128}i + \frac{257445}{128}\rho +\frac{257445}{256}\right)f_{\mathbf{1}}^4 f_{\mathbf{2}}^8  
  + \left(\frac{82615}{64}i + \frac{8945}{6}\rho + \frac{8945}{12}\right)f_{\mathbf{1}}^3 f_{\mathbf{2}}^9  
  \]
  \[
  + \left(\frac{249}{64}i + \frac{295}{64}\rho + \frac{295}{128}\right)f_{\mathbf{1}}^2 f_{\mathbf{2}}^{10}   
  + \left(- \frac{11493}{64}i - \frac{415}{2}\rho - \frac{415}{4}\right)f_{\mathbf{1}}f_{\mathbf{2}}^{11}   
  \]
  \[
  + \left(- \frac{6637}{128}i - \frac{22975}{384}\rho - \frac{22975}{768}\right)f_{\mathbf{2}}^{12}   
  + \left(\frac{15319}{64}i\rho - \frac{5617}{64}i - \frac{15319}{64}\rho - \frac{2617}{8}\right)f_{\mathbf{1}}^{11}  f_{\mathbf{3}} 
  \]
  \[
  + \left(\frac{60161}{64}i\rho - \frac{10999}{32}i - \frac{60161}{64}\rho - \frac{82159}{64}\right)f_{\mathbf{1}}^{10}  f_{\mathbf{2}}f_{\mathbf{3}} 
  \]
  \[
  + \left(- \frac{1955}{64}i\rho + \frac{355}{32}i + \frac{1955}{64}\rho +\frac{2665}{64}\right)f_{\mathbf{1}}^9 f_{\mathbf{2}}^2 f_{\mathbf{3}} 
  \]
  \[
  + \left(- \frac{292245}{64}i\rho + \frac{106935}{64}i + \frac{292245}{64}\rho + \frac{99795}{16}\right)f_{\mathbf{1}}^8 f_{\mathbf{2}}^3 f_{\mathbf{3}} 
  \]
  \[
  + \left(- \frac{238065}{32}i\rho + \frac{87165}{32}i + \frac{238065}{32}\rho + \frac{162615}{16}\right)f_{\mathbf{1}}^7 f_{\mathbf{2}}^4 f_{\mathbf{3}} 
  \]
  \[
  + \left(- \frac{176895}{32}i\rho + \frac{16185}{8}i + \frac{176895}{32}\rho + \frac{241635}{32}\right)f_{\mathbf{1}}^6 f_{\mathbf{2}}^5 f_{\mathbf{3}} 
  \]
  \[
  + \left(- \frac{176895}{32}i\rho + \frac{16185}{8}i + \frac{176895}{32}\rho + \frac{241635}{32}\right)f_{\mathbf{1}}^5 f_{\mathbf{2}}^6 f_{\mathbf{3}} \]
  \[
  + \left(- \frac{238065}{32}i\rho + \frac{87165}{32}i + \frac{238065}{32}\rho + \frac{162615}{16}\right)f_{\mathbf{1}}^4 f_{\mathbf{2}}^7 f_{\mathbf{3}} 
  \]
  \[
  + \left(- \frac{292245}{64}i\rho + \frac{106935}{64}i + \frac{292245}{64}\rho + \frac{99795}{16}\right)f_{\mathbf{1}}^3 f_{\mathbf{2}}^8 f_{\mathbf{3}} 
  \]
  \[
  + \left(- \frac{1955}{64}i\rho + \frac{355}{32}i + \frac{1955}{64}\rho + \frac{2665}{64}\right)f_{\mathbf{1}}^2 f_{\mathbf{2}}^9 f_{\mathbf{3}} 
  \]
  \[
  + \left(\frac{60161}{64}i\rho - \frac{10999}{32}i - \frac{60161}{64}\rho - \frac{82159}{64}\right)f_{\mathbf{1}}f_{\mathbf{2}}^{10}  f_{\mathbf{3}} 
  \]
  \[
  + \left(\frac{15319}{64}i\rho - \frac{5617}{64}i - \frac{15319}{64}\rho - \frac{2617}{8}\right)f_{\mathbf{2}}^{11}  f_{\mathbf{3}} 
  + \left(\frac{7627}{8}i\rho + \frac{7627}{16}i - \frac{105783}{128}\right)f_{\mathbf{1}}^{10}  f_{\mathbf{3}}^2  \]
  \[
  + \left(\frac{29815}{8}i\rho + \frac{29815}{16}i - \frac{206415}{64}\right)f_{\mathbf{1}}^9 f_{\mathbf{2}}f_{\mathbf{3}}^2  
  + \left(- \frac{495}{4}i\rho - \frac{495}{8}i + \frac{13365}{128}\right)f_{\mathbf{1}}^8 f_{\mathbf{2}}^2 f_{\mathbf{3}}^2  
  \]
  \[
  + \left(- \frac{29715}{2}i\rho - \frac{29715}{4}i + \frac{205875}{16}\right)f_{\mathbf{1}}^7 f_{\mathbf{2}}^3 f_{\mathbf{3}}^2  
  + \left(- \frac{181155}{8}i\rho - \frac{181155}{16}i + \frac{1255305}{64}\right)f_{\mathbf{1}}^6 f_{\mathbf{2}}^4 f_{\mathbf{3}}^2  
  \]
  \[
  + \left(- \frac{85473}{4}i\rho - \frac{85473}{8}i + \frac{592011}{32}\right)f_{\mathbf{1}}^5 f_{\mathbf{2}}^5 f_{\mathbf{3}}^2  
  + \left(- \frac{181155}{8}i\rho - \frac{181155}{16}i + \frac{1255305}{64}\right)f_{\mathbf{1}}^4 f_{\mathbf{2}}^6 f_{\mathbf{3}}^2  
  \]
  \[
  + \left(- \frac{29715}{2}i\rho - \frac{29715}{4}i + \frac{205875}{16}\right)f_{\mathbf{1}}^3 f_{\mathbf{2}}^7 f_{\mathbf{3}}^2  
  + \left(- \frac{495}{4}i\rho - \frac{495}{8}i + \frac{13365}{128}\right)f_{\mathbf{1}}^2 f_{\mathbf{2}}^8 f_{\mathbf{3}}^2  
  \]
  \[
  + \left(\frac{29815}{8}i\rho + \frac{29815}{16}i - \frac{206415}{64}\right)f_{\mathbf{1}}f_{\mathbf{2}}^9 f_{\mathbf{3}}^2  
  + \left(\frac{7627}{8}i\rho + \frac{7627}{16}i - \frac{105783}{128}\right)f_{\mathbf{2}}^{10}  f_{\mathbf{3}}^2  
  \]
  \[
  + \left(\frac{229685}{192}i\rho + \frac{313915}{192}i + \frac{229685}{192}\rho - \frac{42115}{96}\right)f_{\mathbf{1}}^9 f_{\mathbf{3}}^3  
  \]
  \[
  + \left(\frac{272295}{64}i\rho + \frac{371805}{64}i + \frac{272295}{64}\rho - \frac{49755}{32}\right)f_{\mathbf{1}}^8 f_{\mathbf{2}}f_{\mathbf{3}}^3  
  \]
  \[
  + \left(- \frac{14415}{16}i\rho - \frac{4905}{4}i - \frac{14415}{16}\rho + \frac{5205}{16}\right)f_{\mathbf{1}}^7 f_{\mathbf{2}}^2 f_{\mathbf{3}}^3  
  \]
  \[
  + \left(- \frac{247975}{16}i\rho - \frac{169405}{8}i - \frac{247875}{16}\rho + \frac{90835}{16}\right)f_{\mathbf{1}}^6 f_{\mathbf{2}}^3 f_{\mathbf{3}}^3  
  \]
  \[
  + \left(- \frac{736875}{32}i\rho - \frac{1006515}{32}i - \frac{736875}{32}\rho + \frac{33705}{4}\right)f_{\mathbf{1}}^5 f_{\mathbf{2}}^4 f_{\mathbf{3}}^3  
  \]
  \[
  + \left(- \frac{736875}{32}i\rho - \frac{1006515}{32}i - \frac{736875}{32}\rho + \frac{33705}{4}\right)f_{\mathbf{1}}^4 f_{\mathbf{2}}^5 f_{\mathbf{3}}^3  
  \]
  \[
  + \left(- \frac{247975}{16}i\rho - \frac{169405}{8}i - \frac{247975}{16}\rho + \frac{90835}{16}\right)f_{\mathbf{1}}^3 f_{\mathbf{2}}^6 f_{\mathbf{3}}^3  
  \]
  \[
  + \left(- \frac{14415}{16}i\rho - \frac{4905}{4}i - \frac{14415}{16}\rho + \frac{5205}{16}\right)f_{\mathbf{1}}^2 f_{\mathbf{2}}^7 f_{\mathbf{3}}^3 
  \]
  \[
   + \left(\frac{272295}{64}i\rho + \frac{371805}{64}i + \frac{272295}{64}\rho - \frac{49755}{32}\right)f_{\mathbf{1}}f_{\mathbf{2}}^8 f_{\mathbf{3}}^3  
   \]
   \[
   + \left(\frac{229685}{192}i\rho + \frac{313915}{192}i + \frac{229685}{192}\rho - \frac{42115}{96}\right)f_{\mathbf{2}}^9 f_{\mathbf{3}}^3  
   \]
   \[
   + \left(\frac{225855}{128}i + \frac{260645}{128}\rho + \frac{260645}{256}\right)f_{\mathbf{1}}^8 f_{\mathbf{3}}^4  
   + \left(\frac{166125}{32}i + \frac{47965}{8}\rho + \frac{47965}{16}\right)f_{\mathbf{1}}^7 f_{\mathbf{2}}f_{\mathbf{3}}^4  
   \]
   \[
   + \left(- \frac{100095}{32}i - \frac{115585}{64}\rho - \frac{115585}{64}\right)f_{\mathbf{1}}^6 f_{\mathbf{2}}^2 f_{\mathbf{3}}^4  
   + \left(- \frac{642825}{32}i - \frac{185585}{8}\rho - \frac{185585}{16}\right)f_{\mathbf{1}}^5 f_{\mathbf{2}}^3 f_{\mathbf{3}}^4  
   \]
   \[
   + \left(- \frac{1732275}{64}i - \frac{2000225}{64}\rho - \frac{2000225}{128}\right)f_{\mathbf{1}}^4 f_{\mathbf{2}}^4 f_{\mathbf{3}}^4 
   + \left(- \frac{642825}{32}i - \frac{185585}{8}\rho - \frac{185585}{16}\right)f_{\mathbf{1}}^3 f_{\mathbf{2}}^5 f_{\mathbf{3}}^4  
   \]
   \[
   + \left(- \frac{100095}{32}i - \frac{115585}{32}\rho - \frac{115585}{64}\right)f_{\mathbf{1}}^2 f_{\mathbf{2}}^6 f_{\mathbf{3}}^4  
   + \left(\frac{166125}{32}i + \frac{47965}{8}\rho + \frac{47965}{16}\right)f_{\mathbf{1}}f_{\mathbf{2}}^7 f_{\mathbf{3}}^4  
   \]
   \[
   + \left(\frac{225855}{128}i + \frac{260645}{128}\rho + \frac{260645}{256}\right)f_{\mathbf{2}}^8 f_{\mathbf{3}}^4  
   + \left(-\frac{38237}{32}i\rho+\frac{14027}{32}i+\frac{38237}{32}+\frac{6533}{4}\right)f_{\mathbf{1}}^7f_{\mathbf{3}}^5  
   \]
   \[
+\left(-\frac{82175}{32}i\rho+\frac{7505}{8}i+\frac{82175}{32}\rho+ \frac{112195}{32}\right)f_{\mathbf{1}}^6f_{\mathbf{2}}f_{\mathbf{3}}^5  
\]
\[               
+\left(\frac{ 115527}{32}i\rho-\frac{21111 }{16}i -\frac{115527}{32}\rho-\frac{157749}{32}\right)f_{\mathbf{1}}^5f_{\mathbf{2}}^2f_{\mathbf{3}}^5 
\]
\[
+\left(\frac{387005  }{32}i\rho- \frac{141665}{32}i-\frac{387005}{32}\rho- \frac{264335}{16}\right)f_{\mathbf{1}}^4f_{\mathbf{2}}^3f_{\mathbf{3}}^5 
\]
\[
+\left(
\frac{ 387005 }{32}i\rho-\frac{141665 }{32}i-\frac{   387005}{32}\rho-   
      \frac{264335}{16}\right)f_{\mathbf{1}}^3f_{\mathbf{2}}^4f_{\mathbf{3}}^5 
      \]
      \[
      +\left(\frac{ 115527}{32}i\rho-\frac{21111}{16}-\frac{115527}{32}\rho-\frac{ 157749}{32}\right)f_{\mathbf{1}}^2f_{\mathbf{2}}^5f_{\mathbf{3}}^5
      \]
      \[
      +\left(-\frac{ 82175}{32}i\rho+\frac{      7505}{8}i+\frac{82175 }{32}\rho+\frac{   112195}{32}\right)f_{\mathbf{1}}f_{\mathbf{2}}^6f_{\mathbf{3}}^5 
      \]
      \[
      +\left(-\frac{38237}{32}i\rho+\frac{14027}{32}i+\frac{38237}{32}\rho+\frac{6533}{4}\right)f_{\mathbf{2}}^7f_{\mathbf{3}}^5
      + \left(-\frac{     7605}{8}i\rho-\frac{      7605}{16}i+\frac{52743}{64}\right)f_{\mathbf{1}}^6 f_{\mathbf{3}}^6 
\]
\[
      +\left(-\frac{     8895 }{8}i\rho-\frac{     8895}{16}i+\frac{ 30747}{32}\right)f_{\mathbf{1}}^5f_{\mathbf{2}}f_{\mathbf{3}}^6 
+\left(\frac{  8115}{2}i\rho+\frac{      8115}{4}i-\frac{    224655}{64}\right)f_{\mathbf{1}}^4 f_{\mathbf{2}}^2 f_{\mathbf{3}}^6 
\]
\[
+\left(
\frac{   31005}{4}i\rho+\frac{      31005}{8}i-\frac{ 107475}{16}\right)f_{\mathbf{1}}^3 f_{\mathbf{2}}^3 f_{\mathbf{3}}^6 
+\left(\frac{   8115}{2}i\rho+\frac{ 8115}{4}i-\frac{224655}{64}\right)f_{\mathbf{1}}^2 f_{\mathbf{2}}^4 f_{\mathbf{3}}^6 
\]
\[
+\left(-\frac{8895}{8}i\rho -
\frac{     8895 }{16}i+\frac{  30747 }{32}\right)f_{\mathbf{1}}f_{\mathbf{2}}^5f_{\mathbf{3}}^6 
\]
\[
+\left(-\frac{7605}{8}i\rho-\frac{7605}{16}i+\frac{    52743}{64}\right)f_{\mathbf{2}}^6 f_{\mathbf{3}}^6   
+\left(-\frac{ 7657 }{32}i\rho -\frac{10463 }{32}i - \frac{7657}{32}\rho+\frac{    1403}{16}\right)  f_{\mathbf{1}}^5 f_{\mathbf{3}}^7 
\]
\[
+\left(\frac{775}{32}i\rho+\frac{1055}{32}i+\frac{775}{32}\rho-\frac{35}{4}\right)f_{\mathbf{1}}^4f_{\mathbf{2}}f_{\mathbf{3}}^7+\left(\frac{20305}{16}i\rho+\frac{6935}{4}i+ \frac{ 20305}{16}\rho -     \frac{7435}{16}\right)f_{\mathbf{1}}^3f_{\mathbf{2}}^2f_{\mathbf{3}}^7 
\]
\[
+\left(\frac{ 20305}{16}i\rho +\frac{6935}{4}i+\frac{ 20305}{16}\rho-\frac{    7435}{16}\right)f_{\mathbf{1}}^2f_{\mathbf{2}}^3f_{\mathbf{3}}^7 
+\left(\frac{775}{32}i\rho+\frac{1055}{32}i+\frac{    775}{32}\rho-\frac{    35}{4}\right)f_{\mathbf{1}}f_{\mathbf{2}}^4f_{\mathbf{3}}^7
\]
\[
+\left(-\frac{ 7657}{32}i\rho-\frac{10463}{32}-\frac{    7657}{32}+\frac{1403}{16}\right)f_{\mathbf{2}}^5f_{\mathbf{3}}^7+\left(-\frac{      6735}{128}i-\frac{    7755}{128}\rho-\frac{    7755}{256}\right)f_{\mathbf{1}}^4f_{\mathbf{3}}^8
\]
\[
+\left(\frac{7995}{64}i+\frac{    1155}{8}\rho+\frac{    1155}{16}\right)f_{\mathbf{1}}^3f_{\mathbf{2}}f_{\mathbf{3}}^8
+\left(\frac{22725}{64}i+\frac{    26235}{64}\rho+\frac{    26235}{128}\right)f_{\mathbf{1}}^2f_{\mathbf{2}}^2f_{\mathbf{3}}^8
\]
\[
+\left(\frac{7995}{64}i+\frac{    1155}{8}\rho+\frac{    1155}{16}\right)f_{\mathbf{1}}f_{\mathbf{2}}^3f_{\mathbf{3}}^8
+\left(-\frac{ 6735}{128}i 
      -\frac{        7755}{128}\rho-\frac{    7755}{256}\right)f_{\mathbf{2}}^4f_{\mathbf{3}}^8
      \]
      \[
      +\left(-\frac{ 215}{192}i\rho+\frac{       65}{192}i+\frac{    215}{192}\rho+\frac{    35}{24}\right)f_{\mathbf{1}}^3f_{\mathbf{3}}^9
      +\left(-\frac{1955}{64}i\rho +    
\frac{      355}{32}i+\frac{    1955 }{64}\rho+\frac{   2665}{64}\right)f_{\mathbf{1}}^2f_{\mathbf{2}}f_{\mathbf{3}}^9
\]
\[
+\left(-\frac{   1955}{64}i\rho+\frac{      355}{32}i+\frac{    1955}{64}\rho+\frac{    2665}{64}\right)f_{\mathbf{1}}f_{\mathbf{2}}^2f_{\mathbf{3}}^9
+\left(-\frac{215}{192}i\rho+\frac{65}{192}i+\frac{215}{192}+\frac{35}{24}\right)f_{\mathbf{2}}^3 f_{\mathbf{3}}^9 
\]
\[
+\left(-\frac{11}{4}i\rho-\frac{11}{8}i+\frac{297}{128}\right)f_{\mathbf{1}}^2f_{\mathbf{3}}^{10} 
+\left(-\frac{11}{2}i\rho-\frac{11}{4}i+
\frac{297}{64}\right)f_{\mathbf{1}}f_{\mathbf{2}}f_{\mathbf{3}}^{10}     
\]
\[
+\left(-\frac{11}{4}i\rho-\frac{11}{8}i+\frac{297}{128}\right)f_{\mathbf{2}}^2f_{\mathbf{3}}^{10}                 
   + \left(- \frac{21}{64}i\rho - \frac{27}{64}i - \frac{21}{64}\rho + \frac{3}{32}\right)f_{\mathbf{1}}f_{\mathbf{3}}^{11}   
   \]
   \[
   + \left(- \frac{21}{64}i\rho - \frac{27}{64}i - \frac{21}{64}\rho + \frac{3}{32}\right)f_{\mathbf{2}}f_{\mathbf{3}}^{11}   
   + \left(- \frac{3}{128}i - \frac{5}{128}\rho -\frac{5}{256}\right)f_{\mathbf{3}}^{12}  
\]

\bibliography{bibliog}{}
\bibliographystyle{amsalpha}
\end{document}